\newtheorem{theorem}{\bf \large Theorem}[section]
\newtheorem{PROPOSITION}{\bf \large Proposition}[section]
\newtheorem{corollary}{\bf \large Corollary}[section]
\newtheorem{lemma}{\bf \large Lemma}[section]
\newtheorem{ex}{\bf \large Example}[section]
\title{\textbf{Dupin Hypersurfaces in  Lorentzian Space forms}}
\author {{\bf Tongzhu Li$^{a,b}$, Changxiong Nie$^c$} \\
\small{$a$ Department of Mathematics, Beijing Institute of
Technology,} \\
\small{Beijing,China,100081, E-mail:litz@bit.edu.cn.} \\
\small{$b$ Beijing Key Laboratory on MCAACI,}
\small{Beijing,China,100081.} \\
\small{$c$ Faculty of Mathematics and Computer Sciences, Hubei
University, }\\
\small{Wuhan, China, 430062, E-mail:nie.hubu@yahoo.com.cn.}}
\date{}
\begin{document}
\maketitle
\begin{abstract}
Similar to the definition of Dupin hypersurface in Riemannian space forms, we define the spacelike Dupin hypersurface in Lorentzian space forms. As
conformal invariant objects, spacelike Dupin  hypersurfaces are studied in this paper
using the framework of conformal geometry. Further we classify the  spacelike Dupin hypersurfaces with constant M\"{o}bius curvatures,
which are the partition ratio of the principal curvatures of the  spacelike Dupin hypersurface.
\end{abstract}

\medskip\noindent
{\bf 2000 Mathematics Subject Classification:} 53A30, 53C50.
\par\noindent {\bf Key words:} Dupin hypersurface,
Principal curvatures, M\"{o}bius curvatures.

\vskip 1 cm
\section{Introduction}
\par\medskip
Since Dupin surfaces were first studied by Dupin in 1822, the study of Dupin hypersurfaces in $\mathbb{R}^{n+1}$ has been a topic of increasing interest, (see \cite{cecil6,cecil8,cecil9,miya1,miya2,miya3,
pink1,pink2,pink3,tho1}), especially recently.
In this paper we study Dupin hypersurfaces in the Lorentzian space form $M^{n+1}_1(c)$.

Let $ \mathbb{R}^{n+2}_s$ be the real vector space $ \mathbb{R}^{n+2}$ with the
Lorentzian  product $\langle,\rangle_s$ given by
$$\langle X,Y\rangle_s=-\sum_{i=1}^sx_iy_i+\sum_{j=t+1}^{n+2}x_jy_j.$$
 For any $a>0$, the standard sphere $\mathbb{S}^{n+1}(a)$, the hyperbolic space $\mathbb{H}^{n+1}(-a)$,
the de sitter space $\mathbb{S}^{n+1}_1(a)$ and the anti-de sitter space $\mathbb{H}^{n+1}_1(-a)$ are defined by
\begin{equation*}
\begin{split}
\mathbb{S}^{n+1}(a)=\{x\in \mathbb{R}^{n+2}|x\cdot x=a^2\},~~\mathbb{H}^{n+1}(-a)=\{x\in \mathbb{R}^{n+2}_1|\langle x,x\rangle_1=-a^2\},\\
\mathbb{S}^{n+1}_1(a)=\{x\in \mathbb{R}^{n+2}_1|\langle x,x\rangle_1=a^2\},~~ \mathbb{H}^{n+1}_1(-a)=\{x\in
\mathbb{R}^{n+2}_2|\langle x,x\rangle_2=-a^2\}.
\end{split}
\end{equation*}
Let $M^{n+1}_1(c)$ be a Lorentz space form. When $c=0$,
$M^{n+1}_1(c)=\mathbb{R}^{n+1}_1$; When $c=1$, $M^{n+1}_1(c)=\mathbb{S}^{n+1}_1(1)$,
When $c=-1$, $M^{n+1}_1(c)=\mathbb{H}^{n+1}_1(-1)$.

Let $x:M^n\rightarrow M^{n+1}_1(c)$ be a spacelike hypersurface
in the Lorentzian space form $M^{n+1}_1(c)$. A curvature surface of $M^n$ is a smooth connected submanifold $S$
such that for each point $p\in S$, the tangent space $T_pS$ is equal to a principal space of the shape operator $\mathcal{A}$ of $M^n$ at $p$.
The hypersurface $M^n$ is called Dupin hypersurface if, along each curvature surface, the associated principal curvature is constant. The Dupin
hypersurface $M^n$ is called proper Dupin if the number $r$ of distinct principal curvatures is constant on $M^n$. The simple examples of spacelike Dupin hypersurface
are the  isoparametric hypersurfaces in $M^{n+1}_1(c)$, which are classified (see \cite{li,ma2,x}).

Similar to the Dupin hypersurfaces in Riemannian space forms, the Spacelike Dupin hypersurfaces in $M_1^{n+1}(c)$ are invariant
under the conformal transformations of $M_1^{n+1}(c)$.
Using Pinkall's method of constructed Dupin hypersurface in $\mathbb{R}^{n+1}$ (\cite{pink2}), we can  use the basic
constructions of building cylinders and  cones over a Dupin
hypersurface $W^{n-1}$ in $\mathbb{R}_1^n$ with $r-1$ principal curvatures to get a Dupin hypersurface $W^{n-1+k}$
in $\mathbb{R}_1^{n+k}$ with $r$ principal curvatures. Therefore we  show that, given any positive
integers $m_1,\cdots,m_r$ such that they sum to $n$, there exists a proper Dupin
hypersurface in $\mathbb{R}^{n+1}_1$ with $r$ distinct principal curvatures having respective
multiplicities $m_1,\cdots,m_r$. In general, these construction are local.

When the spacelike hypersurface $M^n$ has $r (\geq 3)$ distinct principal curvatures $\lambda_1,\cdots,\lambda_r$, the M\"{o}bius curvatures are defined by
$$\mathbb{M}_{ijs}=\frac{\lambda_i-\lambda_j}{\lambda_i-\lambda_s},~~~1\leq i,j,k\leq n.$$
The M\"{o}bius curvatures $\mathbb{M}_{ijs}$ are invariant under the conformal transformations  of $M_1^{n+1}(c)$ (see section 2). Our main results are
as follows,
\begin{theorem}\label{th1}
Let $x: M^n\to M^{n+1}_1(c)$ be a spacelike Dupin hypersurface in $M_1^{n+1}(c)$ with two distinct principal curvatures.  Then
locally $x$
is conformally equivalent to one of the following hypersurfaces,\\
(1), $\mathbb{S}^k(\sqrt{a^2+1})\times\mathbb{H}^{n-k}(-a)\subset\mathbb{S}^{n+1}_1, ~~a>0, ~1\leq k\leq n;$\\
(2), $\mathbb{H}^k(-a)\times\mathbb{H}^{n-k}(-\sqrt{1-a^2})\subset\mathbb{H}^{n+1}_1,~~0<a<1,~1\leq k\leq n;$\\
(3), $\mathbb{H}^k(a)\times\mathbb{R}^{n-k}
\subset\mathbb{R}^{n+1}_1,~~a>0,~0\leq k\leq n.$
\end{theorem}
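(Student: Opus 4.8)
The plan is to carry out the whole argument inside the conformal (Möbius) geometry of spacelike hypersurfaces established in Section~2, using the Möbius metric $\mathbf{g}$, the Möbius second fundamental form $\mathbf{B}$, the Blaschke tensor $\mathbf{A}$ and the Möbius form $\Phi$. The strategy is to show first that the Dupin hypothesis together with ``exactly two distinct principal curvatures'' forces $x$ to be a Möbius isoparametric hypersurface (that is, $\Phi\equiv 0$ and $\mathbf{B}$ has constant eigenvalues), and then to match such a hypersurface with the standard products (1)--(3).

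\emph{Step 1: the Möbius principal curvatures are constant.} Since the number of distinct principal curvatures and the Dupin property are conformally invariant, $\mathbf{B}$ has exactly two distinct eigenvalues $b_1,b_2$ of constant multiplicities $k$ and $n-k$ (they are proportional, through the universal conformal factor, to $\lambda_1-H$ and $\lambda_2-H$). Imposing the normalizations $\operatorname{tr}\mathbf{B}=0$ and $|\mathbf{B}|^2=(n-1)/n$ of Section~2 gives
\begin{equation*}
k\,b_1+(n-k)\,b_2=0,\qquad k\,b_1^2+(n-k)\,b_2^2=\frac{n-1}{n},
\end{equation*}
whence $b_1^2=\dfrac{(n-1)(n-k)}{n^2k}$ and $b_2^2=\dfrac{(n-1)k}{n^2(n-k)}$; in particular $b_1,b_2$ are constants depending only on $n$ and $k$.

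\emph{Step 2: $\Phi\equiv 0$ and the Möbius metric splits.} Choose a local orthonormal frame $\{E_i\}$ for $\mathbf{g}$ adapted to the two eigendistributions $V_1,V_2$ of $\mathbf{B}$. The Dupin condition --- $\lambda_1$ constant along the curvature surfaces tangent to $V_1$ and $\lambda_2$ constant along those tangent to $V_2$ --- translates into the vanishing of the mixed-block components of certain covariant derivatives in this frame; feeding this, together with the constancy of $b_1,b_2$, into the Möbius Codazzi equations $B_{ij,k}-B_{ik,j}=\delta_{ij}\Phi_k-\delta_{ik}\Phi_j$ forces $\Phi\equiv 0$. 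The remaining Codazzi, Gauss and Ricci equations of Section~2 then yield that $\mathbf{A}$ is simultaneously diagonalized with $\mathbf{B}$ and has constant eigenvalues, that $V_1$ and $V_2$ are integrable with totally geodesic leaves in $\mathbf{g}$, and hence that $\mathbf{g}$ is locally a Riemannian product $\mathbf{g}_1\oplus\mathbf{g}_2$ of two constant-curvature factors of dimensions $k$ and $n-k$.

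\emph{Step 3: identification with the list.} Steps 1--2 say precisely that $x$ is a Möbius isoparametric spacelike hypersurface with two distinct principal curvatures. One then verifies directly that each hypersurface in (1), (2), (3) is of this type --- computing its shape operator and Möbius invariants, using that a standard product of two space forms inside $M^{n+1}_1(c)$ has two constant principal curvatures --- and, conversely, that the constants produced in Steps 1--2 (the $b_i$ and the eigenvalues of $\mathbf{A}$), together with the signature restrictions of the light-cone model, force $x$ to be Möbius equivalent to one entry of the list: the integer $k$ records the multiplicity, the real parameter $a>0$ records the single essential Möbius invariant of $\mathbf{A}$, and the three cases correspond to the three Lorentzian space forms in which a representative of the conformal class can be positioned. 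Undoing the conformal maps gives the ``locally conformally equivalent'' conclusion. I expect Step~2 to be the main obstacle: converting ``principal curvatures constant along curvature surfaces'' into algebraic relations among the Möbius connection coefficients in the adapted frame, and then manipulating the Codazzi/Gauss/Ricci equations to extract both $\Phi\equiv 0$ and the product splitting, is the only part that is not essentially linear algebra or a direct verification.
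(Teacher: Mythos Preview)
Your overall strategy matches the paper's: reduce to a M\"obius-isoparametric hypersurface and then identify it with one of the three products. Step~1 is exactly what the paper does. However, your diagnosis of where the difficulty lies is inverted, and Step~3 has a genuine gap.

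For Step~2, the paper obtains $C\equiv 0$ (your $\Phi$) much more cheaply than you suggest: the computation in Section~3 shows that the Dupin condition is \emph{equivalent} to $C_i=E_i(b_i)$ in an eigenframe of $B$, so once $b_1,b_2$ are constant (Step~1), $C=0$ is immediate. Your phrasing ``Dupin translates into the vanishing of mixed-block components of certain covariant derivatives'' is not quite right and is not needed. After $C=0$, the paper does show (via $\omega_{i\alpha}=0$, $R_{i\alpha i\alpha}=0$, and the Gauss equation) that $A$ is block-constant with eigenvalues $a_1,a_2$; this is close to what you sketch.

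The substantive gap is Step~3. The paper does \emph{not} argue via a Riemannian product splitting of $g$ and a direct comparison of invariants. Instead it observes that block-constancy of $A$ gives $A=\mu B+\lambda g$ for constants $\mu,\lambda$, and then proves a key lemma: from the structure equations one finds that the vector $e:=N-\lambda Y-\mu\xi\in\mathbb{R}^{n+3}_2$ is \emph{constant}, with $\langle e,e\rangle=\mu^2-2\lambda$. The trichotomy you allude to is precisely the causal type of $e$: lightlike, spacelike, or timelike. After a transformation in $O(n+3,2)$ normalizing $e$, one reads off that $e^{\bar\tau}$ is constant and $\bar H=\mu$, so the image hypersurface in $\mathbb{R}^{n+1}_1$, $\mathbb{H}^{n+1}_1$, or $\mathbb{S}^{n+1}_1$ has constant mean curvature and constant scalar curvature---hence constant principal curvatures---and the paper then invokes the existing classification of spacelike isoparametric hypersurfaces in Lorentzian space forms to land on the list. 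Your Step~3 contains neither the relation $A=\mu B+\lambda g$, nor the constant-vector argument, nor the appeal to the isoparametric classification; ``the three cases correspond to the three Lorentzian space forms in which a representative can be positioned'' is a restatement of the conclusion, not a mechanism for obtaining it.
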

\begin{theorem}\label{th2}
Let $x: M^n\to M^{n+1}_1(c)$ be a spacelike Dupin hypersurface in $M_1^{n+1}(c)$ with $r (\geq 3)$ distinct principal curvatures. If the  M\"{o}bius curvatures are constant, then
$r=3$, and locally $x$
is conformally equivalent to the following hypersurface,
$$
 x:\mathbb H^{q}(\sqrt{a^2-1})\times\mathbb S^{p}(a)\times
 \mathbb R^+\times\mathbb R^{n-p-q-1}\rightarrow
 \mathbb R^{n+1}_1 ,$$ defined by $$
  x(u',u'',t,u''')=(tu',tu'',u'''),$$ where $u'\in \mathbb H^{q}(\sqrt{a^2-1}), u''\in\mathbb S^{p}(a),u'''\in\mathbb R^{n-p-q-1},~~a>1.$
\end{theorem}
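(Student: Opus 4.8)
The plan is to work entirely in the Möbius (conformal) geometry of spacelike hypersurfaces set up in Section~2. To $x$ one attaches its Möbius metric $g$, its Möbius second fundamental form $B=\sum_{ij}B_{ij}\omega_i\omega_j$ (with $\mathrm{tr}\,B=0$ and $\|B\|^{2}=\tfrac{n-1}{n}$), its Blaschke tensor $A=\sum_{ij}A_{ij}\omega_i\omega_j$ and its Möbius form $C=\sum_i C_i\omega_i$, together with the Gauss--Codazzi-type integrability equations relating $A,B,C$. Since the Möbius principal curvatures $b_\alpha$ and the ordinary principal curvatures $\lambda_\alpha$ satisfy $b_\alpha-b_\beta=\rho^{-1}(\lambda_\alpha-\lambda_\beta)$ for one common positive factor $\rho$, one has $\mathbb{M}_{\alpha\beta\gamma}=(b_\alpha-b_\beta)/(b_\alpha-b_\gamma)$, so the hypothesis is exactly that the $b_\alpha$ have constant cross-ratios.

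The first step is to show that $b_1,\dots,b_r$ are all constant on $M^n$. Fixing the combinatorial data, constancy of the ratios $(b_\alpha-b_1)/(b_\alpha-b_2)$ forces $b_\alpha=p_\alpha b_1+q_\alpha b_2$ with constants $p_\alpha,q_\alpha$ satisfying $p_\alpha+q_\alpha=1$ (and $(p_1,q_1)=(1,0)$, $(p_2,q_2)=(0,1)$); thus the whole tuple $(b_\alpha)$ varies in a fixed $2$-plane of configurations. The trace condition $\sum_\alpha m_\alpha b_\alpha=0$ is a genuine linear constraint on $(b_1,b_2)$: it cannot hold identically, since the constant tuple $b_\alpha\equiv 1$ lies in that plane and gives $\sum_\alpha m_\alpha=n\ne 0$. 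Hence it cuts the configurations down to a line, and $\|B\|^2=\sum_\alpha m_\alpha b_\alpha^2=\tfrac{n-1}{n}$ is a positive-definite quadratic constraint, leaving only finitely many tuples. Therefore $(b_\alpha)$ is locally constant.

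Next, diagonalize $B$ in an orthonormal frame $\{E_i\}$ adapted to the curvature distributions $V_1,\dots,V_r$ (so $BE_i=b_{\alpha(i)}E_i$ with $b_{\alpha(i)}$ constant). The Codazzi equation $B_{ij,k}-B_{ik,j}=\delta_{ij}C_k-\delta_{ik}C_j$ then yields, after a short computation, that $C_k=0$ whenever $E_k$ lies in a distribution of multiplicity $\ge 2$, that $\langle\nabla_{E_i}E_i,E_k\rangle=C_k/(b_{\alpha(i)}-b_{\alpha(k)})$ for $E_i,E_k$ in different distributions, and that $\langle\nabla_{E_j}E_i,E_k\rangle=0$ when $E_i,E_j$ lie in one distribution and $E_k$ in another. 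Substituting this information, together with the Codazzi equation for $A$, the identity $C_{i,j}-C_{j,i}=(b_i-b_j)A_{ij}$, and the Gauss equation back into the structure equations determines the Blaschke tensor $A$ and shows the integrability conditions are compatible only when $r=3$. Proving this last point --- ruling out $r\ge 4$ --- is where I expect the main difficulty to lie: one has to extract from the over-determined system a Cartan-type relation among the constants $b_\alpha$ (analogous to the classical isoparametric identity) that has no solution with four or more distinct values under the two normalizations above.

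Finally, once $r=3$, the invariants $A,B,C$ are pinned down up to the parameters hidden in the three multiplicities $m_1,m_2,m_3$ and in the values $b_1,b_2,b_3$; the three curvature distributions are integrable with explicitly understood leaves, and I would integrate the structure equations in coordinates adapted to these distributions to recover the canonical lift $Y:M^n\to$ (light cone), hence $x$ up to a conformal transformation. The resulting hypersurface is recognized as Pinkall's cone over the isoparametric hypersurface $\mathbb{H}^{q}(\sqrt{a^2-1})\times\mathbb{S}^{p}(a)\subset\mathbb{S}^{p+q+1}_1(1)$ multiplied by a flat factor, i.e.\ exactly $x(u',u'',t,u''')=(tu',tu'',u''')$, where $p$ and $q$ are two of the multiplicities (the third being $n-p-q$) and $a>1$ is determined by $b_1,b_2,b_3$. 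One then checks directly that this hypersurface is spacelike, Dupin, and has its three Möbius principal curvatures constant, so all its Möbius curvatures are constant. Since the entire argument is conformally invariant and does not see the value of $c$, this proves the theorem for every $M^{n+1}_1(c)$.
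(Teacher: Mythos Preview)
Your overall strategy matches the paper's: pass to the conformal invariants $(g,A,B,C)$, show the conformal principal curvatures $b_\alpha$ are constant, deduce $C=0$, analyse the Gauss--Codazzi system to force $r=3$, and then integrate the resulting product structure to recognise the cone in Example~\ref{ex4}. Your argument for the constancy of the $b_\alpha$ is essentially Proposition~\ref{pro4}, and the final integration step is also in the same spirit. However, two steps are genuinely incomplete.

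\medskip
\textbf{Vanishing of $C$.} From Codazzi alone you only extract $C_k=0$ on multiplicity--$\ge 2$ distributions and a formula $\langle\nabla_{E_i}E_i,E_k\rangle=C_k/(b_{\alpha(i)}-b_{\alpha(k)})$; this does not by itself kill $C_k$ on simple directions. The clean way is to use the Dupin hypothesis directly: the identity $C_i=E_i(b_i)-e^{-\tau}E_i(\lambda_i)$ (equation~\eqref{dupin}) together with $E_i(\lambda_i)=0$ (Dupin) and $b_i$ constant gives $C\equiv 0$ immediately, as in Theorem~\ref{is1}.

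\medskip
\textbf{Ruling out $r\ge 4$.} This is the heart of the theorem, and you explicitly leave it as ``where I expect the main difficulty to lie,'' gesturing at a Cartan-type identity. The paper's argument is more delicate than a single Cartan relation on the $b_\alpha$. One must first prove that the eigenvalues of the Blaschke tensor $A$ are \emph{constant} (Lemma~\ref{blas}); this is not automatic and uses the interplay between the covariant derivatives of $A$ and $B$. One then shows that on each $B$-eigenspace $V_{b_{\bar k}}$, the tensor $A$ has at most two eigenvalues (Lemma~\ref{ble2}), and introduces the affine slices $S_{(i)}(\varepsilon)=\{(a_k,b_k):\,(a_i-a_k)/(b_i-b_k)=\varepsilon\}$. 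A Cartan-type lemma (Lemma~\ref{cartanle}) is applied not to $B$ itself but to the shifted tensors $B+\varepsilon g$ on these slices, forcing each $S_{(i)}(\varepsilon)$ to contain exactly two pairs (Lemma~\ref{le3}). Combined with Lemma~\ref{le2} this yields $R_{ijij}=0$ whenever $b_i\neq b_j$, hence $B_{ij,k}\equiv 0$ (i.e.\ $B$ is parallel). Only \emph{after} $B$ is parallel does the algebraic obstruction appear: from $-b_ib_j+a_i+a_j=0$ for all $i\neq j$ one derives $(b_1-b_4)(b_2-b_3)=0$ when $r\ge 4$, a contradiction. None of this structure --- constancy of $A$, the refined eigenspace analysis, or the slice argument --- is present in your sketch, and a naive Cartan identity on the $b_\alpha$ alone does not suffice because the Gauss equation involves the a priori unknown $a_i$'s.
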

This paper is organized as follows. In section 2, we define some conformal invariants on a spacelike hypersurface and show that M\"{o}bius
curvatures are invariant under the conformal transformations  of $M_1^{n+1}(c)$. In section 3, we study the spacelike Dupin hypersurfaces in the framework of conformal geometry. In
section 4 and section 5, we give the proof of Theorem \ref{th1} and Theorem \ref{th2}, respectively.

\par\noindent
\section{Conformal geometry of Hypersurface in $M^{n+1}_1(c)$}
\par\medskip
In this section, following Wang's idea in paper \cite{w}, we define some conformal invariants on a spacelike  hypersurface
and give a congruent theorem of the spacelike hypersurfaces under the conformal
group of $M^{n+1}_1(c)$.

We denote by $C^{n+2}$ the cone in $\mathbb R^{n+3}_2$ and by
$\mathbb Q^{n+1}_1$ the conformal
compactification space in $\mathbb R P^{n+3}$,
$$C^{n+2}=\{X\in\mathbb R^{n+3}_2|\langle X,X\rangle_2=0,X\neq0\},$$
$$\mathbb Q^{n+1}_1=\{[X]\in\mathbb R P^{n+2}|\langle X,X\rangle_2=0\}.$$
Let $O(n+3,2)$ be the Lorentzian group of $\mathbb{R}^{n+3}_2$ keeping the
Lorentzian product $\langle X,Y\rangle_2$ invariant. Then $O(n+3,2)$ is a
transformation group on $\mathbb Q^{n+1}_1$ defined by
\begin{equation*}
T([X])=[XT], ~~~X\in C^{n+2}, ~~~T\in O(n+3,2).
\end{equation*}
Topologically $\mathbb Q ^{n+1}_1$ is identified with the compact
space $S^n\times S^1/S^0$, which is endowed by a standard Lorentzian
metric $h=g_{S^n}\oplus(-g_{S^1})$.  Then $\mathbb Q^{n+1}_1$ has
conformal metric $$[h]=\{e^\tau h|\tau\in C^ \infty(\mathbb
Q^{n+1}_1)\}$$ and $[O(n+3,2)]$ is the conformal transformation group of
$\mathbb Q^{n+1}_1$(see\cite{cahne,ma1,o}).

Denote $\pi=\{[X]\in \mathbb Q ^{n+1}_1|x_1=x_{n+2}\},~~\pi_-=\{[X]\in \mathbb Q ^{n+1}_1|x_{n+2}=0\},~~\pi_+=\{[X]\in \mathbb Q ^{n+1}_1|x_1=0\}$,
we can define the following conformal diffeomorphisms,
\begin{equation}\label{lor}
\begin{array}{l}
 \sigma_0 :
\mathbb{R}^{n+1}_1\rightarrow {\mathbb Q}^{n+1}_1\backslash\pi,~~~~\quad u\mapsto[( \frac{< u,
u>+1}{2},u, \frac{<u, u>-1}{2})],
   \\
\sigma_1: \mathbb{S}^{n+1}_1(1)\rightarrow  {\mathbb Q}^{n+1}_1\backslash\pi_+ ,~~\quad
 u\mapsto [(1,u)],  \\
\sigma_{-1}:  \mathbb{H}^{n+1}_1(-1)\rightarrow {\mathbb Q}^{n+1}_1\backslash\pi_-,~\quad
  u\mapsto [(u,1)]. \\
   \end{array}
\end{equation}
We may regard $\mathbb {Q}^{n+1}_1$ as the common compactification of $\mathbb{R}^{n+1}_1, \mathbb{S}^{n+1}_1(1), \mathbb{H}^{n+1}_1(-1)$.

Let $x:M^n\rightarrow M^{n+1}_1(c)$ be a spacelike hypersurface. Using $\sigma_c$, we obtain the hypersurface
in $\mathbb{Q}^{n+1}_1$, $\sigma_c\circ x:M^n\rightarrow \mathbb {Q}^{n+1}_1$.  From \cite{cahne}, we have the following theorem,
\begin{theorem}
Two hypersurfaces $x,\bar{x}:M^n\rightarrow M^{n+1}_1(c)$ are
conformally equivalent if and only if there exists $T\in O(n+3,2)$ such
that $\sigma_c\circ x=T(\sigma_c\circ \bar{x}):M^n\rightarrow \mathbb
Q^{n+1}_1$.
\end{theorem}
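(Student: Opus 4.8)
The plan is to transplant everything into the conformal model $(\mathbb Q^{n+1}_1,[h])$ by means of the diffeomorphisms $\sigma_c$ of \eqref{lor}, which are conformal onto their images $\mathbb Q^{n+1}_1\setminus\pi_c$, where I write $\pi_0=\pi$, $\pi_1=\pi_+$, $\pi_{-1}=\pi_-$. Two ingredients are needed. The first, already recalled in the excerpt, is that $[O(n+3,2)]$ is precisely the group of global conformal transformations of $(\mathbb Q^{n+1}_1,[h])$, each $T\in O(n+3,2)$ acting by $[X]\mapsto[XT]$ and preserving the conformal class $[h]$. The second is a Liouville-type rigidity: since $\dim\mathbb Q^{n+1}_1=n+1\ge 3$, every conformal diffeomorphism between connected open subsets of $\mathbb Q^{n+1}_1$ is the restriction of a unique such $[T]$. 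Throughout I read ``conformally equivalent'' in the sense natural to this framework and consistent with the word ``locally'' in Theorems \ref{th1}--\ref{th2}: $x$ and $\bar x$ are conformally equivalent iff there is a conformal diffeomorphism $\phi$ between open neighbourhoods of $\bar x(M^n)$ and $x(M^n)$ in $M^{n+1}_1(c)$ with $\phi\circ\bar x=x$ (up to a reparametrization of $M^n$, which I suppress).

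Granting these ingredients, the ``if'' direction is immediate: if $T\in O(n+3,2)$ satisfies $\sigma_c\circ x=T(\sigma_c\circ\bar x)$ as maps $M^n\to\mathbb Q^{n+1}_1$, then $[T]$ is a conformal diffeomorphism of $\mathbb Q^{n+1}_1$, and since both $\sigma_c\circ\bar x$ and $\sigma_c\circ x=[T]\circ(\sigma_c\circ\bar x)$ avoid $\pi_c$ one can choose open sets $\widetilde U\supset(\sigma_c\circ\bar x)(M^n)$ and $\widetilde V\supset(\sigma_c\circ x)(M^n)$ inside $\mathbb Q^{n+1}_1\setminus\pi_c$ with $[T](\widetilde U)=\widetilde V$; then $\phi:=\sigma_c^{-1}\circ[T]\circ\sigma_c:\sigma_c^{-1}(\widetilde U)\to\sigma_c^{-1}(\widetilde V)$ is a conformal diffeomorphism with $\phi\circ\bar x=x$. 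For the ``only if'' direction, if $x$ and $\bar x$ are conformally equivalent via $\phi$, then $\sigma_c\circ\phi\circ\sigma_c^{-1}$ is a conformal diffeomorphism between connected open subsets of $\mathbb Q^{n+1}_1$; by the Liouville rigidity it extends to some $[T]$ with $T\in O(n+3,2)$, and evaluating on $M^n$ gives $\sigma_c\circ x=\sigma_c\circ\phi\circ\bar x=[T](\sigma_c\circ\bar x)=T(\sigma_c\circ\bar x)$, as required.

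The one genuinely nontrivial point --- given \eqref{lor} --- is the Liouville extension theorem in the Lorentzian category: a conformal map defined on a domain of $\mathbb Q^{n+1}_1$, a locally conformally flat Lorentzian manifold of dimension $\ge 3$, is globally the restriction of an element of $[O(n+3,2)]$. In positive-definite signature this is the classical Liouville theorem on the conformal sphere; in signature $(n,1)$ one must in addition control the causal structure and allow for the fact that the conformal group genuinely mixes the three charts $\sigma_0,\sigma_1,\sigma_{-1}$ --- which is precisely why the clean statement lives on $\mathbb Q^{n+1}_1$ rather than on any single $M^{n+1}_1(c)$. The rest is bookkeeping with the excluded sets $\pi,\pi_\pm$, ensuring that the relevant compositions $\sigma_c^{-1}\circ[T]\circ\sigma_c$ are defined on neighbourhoods of the hypersurfaces in question; since this theorem is quoted from \cite{cahne}, I would cite that reference (together with the standard pseudo-Riemannian Liouville theorem) for the extension step and carry out in detail only the transplantation argument above.
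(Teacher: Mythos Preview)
The paper does not give its own proof of this theorem: it simply states the result and attributes it to \cite{cahne}. Your proposal correctly unpacks what lies behind that citation---transplant to $\mathbb Q^{n+1}_1$ via the conformal embeddings $\sigma_c$, invoke the Lorentzian Liouville-type extension theorem to identify local conformal maps with elements of $[O(n+3,2)]$, and bookkeep the excluded sets $\pi,\pi_\pm$---and the argument is sound, so you are in complete agreement with the paper's (implicit) route, only more explicit.
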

Since $x:M^n\rightarrow M^{n+1}_1(c)$ is a spacelike hypersurface, then $(\sigma_c\circ x)_*(TM^n)$ is a positive
definite subbundle of $T{\mathbb Q}^{n+1}_1$. For any local lift $Z$
of the standard projection $\pi: C^{n+2}\rightarrow
\mathbb{Q}^{n+1}_1$, we get a local lift $y=Z\circ\sigma_c\circ
x:U\rightarrow C^{n+1}$  of $\sigma_c\circ x:
M\rightarrow{\mathbb Q}^{n+1}_1$ in an open subset $U$ of
$M^n$. Thus $<dy,dy>=\lambda^2 dx\cdot dx$ is a local metric, which is
conformal to the induced metric $dx\cdot dx$. We denote by $\Delta$ and $\kappa$ the Laplacian
operator and the normalized scalar curvature with respect to the local positive
definite
metric $\langle\text dy, \text dy\rangle$, respectively.  Similar to Wang's proof of Theorem 1.2 in \cite{w},  we can get
the following theorem,
\begin{theorem}\label{t22}
Let $x:M^n\rightarrow M^{n+1}_1(c)$ be a spacelike hypersurface, then the 2-form $g=-(\langle\Delta y, \Delta
y\rangle-n^2\kappa)\langle\text dy, \text dy\rangle $ is a globally
defined conformal invariant.
Moreover, $g$ is positive definite at any non-umbilical point of
$M^n$.
\end{theorem}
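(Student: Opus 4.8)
The plan is to follow Wang's argument for hypersurfaces in the sphere \cite{w}, keeping careful track of the two extra signs produced by replacing the light cone in $\mathbb R^{n+3}_1$ by the light cone $C^{n+2}\subset\mathbb R^{n+3}_2$. There are two assertions to establish: that $g$ is independent of the choices made in its construction (the local lift $y$ and the local chart), and that it is positive definite away from umbilics. I would settle the invariance first, since that part is insensitive to the ambient signature, and then compute $g$ with one convenient lift to read off the sign.

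\emph{Invariance.} Any two local lifts of $\sigma_c\circ x$ to $C^{n+2}$ differ by a positive function, $\hat y=e^{\tau}y$, so that $\hat I:=\langle d\hat y,d\hat y\rangle=e^{2\tau}I$ with $I=\langle dy,dy\rangle$ a Riemannian metric (here $x$ is spacelike). I would record the standard conformal-change identities for $\hat I=e^{2\tau}I$ in dimension $n$: $\hat\Delta f=e^{-2\tau}\big(\Delta f+(n-2)\langle\nabla\tau,\nabla f\rangle\big)$ for a function $f$, and $\hat\kappa=e^{-2\tau}\big(\kappa-\tfrac2n\Delta\tau-\tfrac{n-2}{n}|\nabla\tau|^2\big)$ for the normalized scalar curvature. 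Applying the Laplacian product rule to each component of $\hat y=e^{\tau}y$ gives $\hat\Delta\hat y=e^{-\tau}\Big(\Delta y+n\,dy(\nabla\tau)+\big(\Delta\tau+(n-1)|\nabla\tau|^2\big)y\Big)$. Then, using only $\langle y,y\rangle=0$, $\langle dy,y\rangle=0$, and the relations $\langle\Delta y,y\rangle=-n$ and $\langle\Delta y,dy\rangle=0$ (both obtained by differentiating $\langle y,y\rangle=0$ twice and using that $I$ is parallel for its own Levi-Civita connection), a direct expansion of $\langle\hat\Delta\hat y,\hat\Delta\hat y\rangle$ and comparison with $n^2\hat\kappa$ yields
\[\langle\hat\Delta\hat y,\hat\Delta\hat y\rangle-n^2\hat\kappa=e^{-2\tau}\big(\langle\Delta y,\Delta y\rangle-n^2\kappa\big).\]
Multiplying by $\hat I=e^{2\tau}I$ shows that $g$ is unchanged; since lifts exist near every point, the locally defined $2$-forms patch to one global conformal invariant.

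\emph{Positivity.} Here I would use the lift $y=Z\circ\sigma_c\circ x$ normalized so that $\langle dy,dy\rangle=dx\cdot dx$ is the induced metric of the spacelike hypersurface. Differentiating the explicit formulas \eqref{lor} for $\sigma_c$ twice and computing $\Delta y=I^{ij}(y_{,ij}-\Gamma^k_{ij}y_{,k})$, one finds that $\Delta y$ has no tangential part, a component along $y$ fixed by $\langle\Delta y,y\rangle=-n$, and a component proportional to (the natural lift of) the timelike unit normal $\xi$ of $M^n\subset M^{n+1}_1(c)$ with coefficient $nH$, where $H$ is the mean curvature. Because $\langle\xi,\xi\rangle=-1$, squaring gives $\langle\Delta y,\Delta y\rangle=n^2(c-H^2)$; on the other hand the Gauss equation of the spacelike hypersurface (which carries a sign opposite to the Riemannian one) gives $n^2\kappa=n^2c+\tfrac{n}{n-1}\big(S-n^2H^2\big)$ with $S=\sum_i\lambda_i^2$. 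Subtracting,
\[\langle\Delta y,\Delta y\rangle-n^2\kappa=-\frac{n}{n-1}\big(S-nH^2\big)=-\frac{n}{n-1}\sum_i(\lambda_i-H)^2,\]
so $g=\dfrac{n}{n-1}\Big(\sum_i(\lambda_i-H)^2\Big)\,dx\cdot dx$. Since $dx\cdot dx$ is positive definite and the factor $\sum_i(\lambda_i-H)^2$ vanishes exactly where all principal curvatures agree, $g$ is positive definite on the complement of the umbilic set.

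I expect the only genuine obstacle to be the sign bookkeeping in the positivity step: relative to Wang's spherical case one extra timelike direction appears in $\mathbb R^{n+3}_2$ and the Gauss equation of a spacelike hypersurface flips the sign of its second-fundamental-form term, and one must verify that these two changes conspire to produce exactly the explicit minus sign in the definition of $g$ while leaving $\sum_i(\lambda_i-H)^2\ge 0$ — which is automatic because the induced metric is Riemannian, so the shape operator is diagonalizable with real principal curvatures $\lambda_1,\dots,\lambda_n$. The invariance computation, once the conformal-change formulas are in hand, is routine algebra.
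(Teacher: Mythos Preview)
Your overall strategy is exactly the paper's: the invariance is deferred to Wang's conformal-rescaling computation, and positivity is read off from the explicit formula $g=\frac{n}{n-1}\big(|II|^2-nH^2\big)\,I$. The paper itself gives no self-contained proof of the theorem --- it simply cites \cite{w} and then records this formula --- so your write-up is in fact more complete than what appears there.

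There is, however, one genuine slip in your positivity step. The normal bundle of $y(M^n)\subset\mathbb R^{n+3}_2$ has rank $3$, not $2$: besides the null vector $y$ and the lift $\tilde\xi$ of the timelike unit normal (with $\langle\tilde\xi,\tilde\xi\rangle=-1$ and $\langle y,\tilde\xi\rangle=0$), there is a second null direction $\eta$ determined by $\langle y,\eta\rangle=1$, $\langle\eta,\tilde\xi\rangle=0$. Since $\langle y,y\rangle=0$, a component of $\Delta y$ \emph{along $y$} contributes nothing to $\langle\Delta y,y\rangle$; the relation $\langle\Delta y,y\rangle=-n$ instead fixes the coefficient of $\eta$. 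A direct computation with the explicit lifts $\sigma_c$ gives
\[
\Delta y=-\tfrac{nc}{2}\,y\;-\;n\,\eta\;+\;nH\,\tilde\xi,
\]
and it is the cross term $2\cdot(-\tfrac{nc}{2})\cdot(-n)\,\langle y,\eta\rangle=n^2c$ that produces the curvature contribution in $\langle\Delta y,\Delta y\rangle=n^2(c-H^2)$. With only the two directions you name, squaring would yield $-n^2H^2$ for every $c$, which is wrong when $c\neq 0$. Once this decomposition is corrected, your Gauss-equation computation and the conclusion $g=\frac{n}{n-1}\sum_i(\lambda_i-H)^2\,I$ are correct, and positivity away from umbilics follows as you say.
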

We call $g$
the conformal metric of hypersurface $x$. There exists a unique lift
$$Y:M\rightarrow C^{n+2}$$ such that $g=<dY,dY>$.  We call $Y$ the conformal position
vector of $x$.
Theorem \ref{t22} implies that
\begin{theorem}
Two spacelike hypersurfaces $x,\bar{x}:M^n\rightarrow M^{n+1}_1(c)$
are conformally equivalent if and only if there exists $T\in O(n+3,2)$
such that $\bar{Y}=YT$, where $Y,\tilde Y$ are the conformal
position vector of $x,\bar{x}$, respectively.
\end{theorem}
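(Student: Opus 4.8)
The plan is to transfer the congruence problem to the canonical lifts by combining the earlier congruence criterion (conformal equivalence of $x$ and $\bar x$ $\Leftrightarrow$ $\sigma_c\circ x=T(\sigma_c\circ\bar x)$ for some $T\in O(n+3,2)$) with Theorem \ref{t22} and the uniqueness clause built into the definition of the conformal position vector. The structural fact used throughout is that the standard projection $\pi\colon C^{n+2}\to\mathbb Q^{n+1}_1$ is equivariant for the two $O(n+3,2)$-actions, i.e. $\pi(ZT)=T(\pi(Z))$ for the action $T([X])=[XT]$; hence if $Z\colon M^n\to C^{n+2}$ is a lift of $\phi\colon M^n\to\mathbb Q^{n+1}_1$, then $ZT$ is a lift of $T(\phi)$, and $\langle d(ZT),d(ZT)\rangle_2=\langle dZ,dZ\rangle_2$ since $T$ preserves $\langle\,,\,\rangle_2$.

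For the ``if'' part I would start from $\bar Y=YT$. Applying $\pi$ and using equivariance, $\sigma_c\circ\bar x=\pi(\bar Y)=\pi(YT)=T(\pi(Y))=T(\sigma_c\circ x)$, i.e. $\sigma_c\circ x=T^{-1}(\sigma_c\circ\bar x)$ with $T^{-1}\in O(n+3,2)$, so the earlier congruence criterion gives that $x$ and $\bar x$ are conformally equivalent.

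For the ``only if'' part, assume $x$ and $\bar x$ are conformally equivalent; the earlier criterion yields $T\in O(n+3,2)$ with $\sigma_c\circ x=T(\sigma_c\circ\bar x)$, hence $\sigma_c\circ\bar x=T^{-1}(\sigma_c\circ x)$. Put $W:=YT^{-1}$. By equivariance $W$ is a lift of $T^{-1}(\sigma_c\circ x)=\sigma_c\circ\bar x$, and $\langle dW,dW\rangle_2=\langle dY,dY\rangle_2=g$, the conformal metric of $x$. Since $g$ is invariant under conformal transformations (Theorem \ref{t22}), it coincides with the conformal metric $\bar g$ of $\bar x$; thus $W$ is a lift of $\sigma_c\circ\bar x$ inducing the metric $\bar g$. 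By the uniqueness of the conformal position vector $\bar Y$ (the unique lift of $\sigma_c\circ\bar x$ with induced metric $\bar g$) we get $\bar Y=W=YT^{-1}$, which is the assertion with $T$ replaced by $T^{-1}$.

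The only step that is not pure bookkeeping is that uniqueness, and I would justify it directly: two lifts $Z_1,Z_2$ of $\sigma_c\circ\bar x$ satisfy $Z_2=\rho Z_1$ with $\rho$ nowhere zero, and after normalizing the component of the cone one may assume $\rho>0$; from $\langle Z_1,Z_1\rangle_2\equiv0$ (hence $\langle Z_1,dZ_1\rangle_2\equiv0$) one gets $\langle dZ_2,dZ_2\rangle_2=\rho^2\langle dZ_1,dZ_1\rangle_2$, so if both sides equal $\bar g$, which is positive definite away from umbilic points by Theorem \ref{t22}, then $\rho\equiv1$ there, and so everywhere by continuity, giving $Z_1=Z_2$. (A spacelike Dupin hypersurface with at least two distinct principal curvatures has no umbilic points, so this subtlety does not arise in the applications; the whole argument is parallel to Wang's in \cite{w}, and I do not expect any genuine obstacle — the theorem is essentially a formal consequence of Theorem \ref{t22} and the earlier congruence criterion.)
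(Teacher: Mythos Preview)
Your argument is correct and is exactly the justification the paper has in mind: the paper does not give a proof at all but simply states that the theorem follows from Theorem~\ref{t22}, and your write-up supplies precisely the missing steps (combining the earlier congruence criterion, the conformal invariance of $g$ from Theorem~\ref{t22}, and the uniqueness of the conformal position vector). Your care with the uniqueness step is appropriate and consistent with the paper's standing assumption of non-umbilicity.
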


Let $\{E_1, \cdots , E_n\}$ be a local orthonormal basis of $M^n$
with respect to $g$ with dual basis $\{\omega_1, \cdots ,
\omega_n\}$. Denote $Y_i=E_i(Y)$ and define
\begin{equation}\label{n1}
N=-\frac{1}{n}\Delta Y-\frac{1}{2n^2}\langle\Delta Y, \Delta
Y\rangle Y,
\end{equation}\label{innp}
where $\Delta$ is the Laplace operator of $g$, then we have
\begin{equation}
\langle N, Y\rangle=1,~ \langle N, N\rangle=0,~\langle N,
Y_k\rangle=0,~ \langle Y_i, Y_j\rangle=\delta_{ij},\quad1\leq i,j,k\leq n.
\end{equation}
We may decompose $\mathbb{R}^{n+3}_2$ such that
$$\mathbb{R}^{n+3}_2=\text{span}\{Y, N\}\oplus \text{span}
\{Y_1, \cdots , Y_n\}\oplus\mathbb V ,$$ where $\mathbb
V\bot\text{span}\{Y, N, Y_1, \cdots , Y_n\}$.  We call $\mathbb V$
the conformal normal bundle of $x$,
which is linear bundle. Let $\xi$ be a local section of $\mathbb{V}$
and $<\xi,\xi>=-1$, then $\{Y, N, Y_1, \cdots , Y_n, \xi\}$ forms a
moving frame in $\mathbb{R}^{n+3}_2$ along $M^n$. We write the structure
equations as follows,
\begin{equation}\label{struct}
\begin{split}
&\mathrm{d}Y=\sum_i\omega_iY_i,\\
&\mathrm{d}N=\sum_{ij}A_{ij}\omega_jY_i+\sum_iC_i\omega_i\xi,\\
&\mathrm{d}Y_i=-\sum_{ij}A_{ij}\omega_jY-\omega_iN+\sum_j\omega_{ij}Y_j+\sum_{ij}B_{ij}\omega_j\xi,\\
&\mathrm{d}\xi=\sum_iC_i\omega_iY+\sum_{ij}B_{ij}\omega_jY_i,
\end{split}
\end{equation}
 where $\omega_{ij}=-\omega_{ij}$ are the connection 1-forms on $M^n$ with respect to $\{\omega_1, \cdots ,
\omega_n\}$.  It is clear that
 $A=\sum_{ij}A_{ij}\omega_j\otimes\omega_i,~
B=\sum_{ij}B_{ij}\omega_j\otimes\omega_i,~ C=\sum_iC_i\omega_i$ are globally defined conformal
invariants. We call $A,~B$ and $C$ the conformal $2$-tensor, the conformal second fundamental form
 and the conformal $1$-form, respectively. The covariant derivatives
of these tensors with respect to  $g$ are defined by:
$$\sum_jC_{i, j}\omega_j=dC_i+\sum_kC_k\omega_{kj},$$
$$\sum_kA_{ij, k}\omega_k=dA_{ij}+\sum_kA_{ik}\omega_{kj}
+\sum_kA_{kj}\omega_{ki},$$
$$\sum_kB_{ij, k}\omega_k=dB_{ij}+\sum_kB_{ik}\omega_{kj}
+\sum_kB_{kj}\omega_{ki},$$
By
exterior differentiation of structure equations (\ref{struct}),
we can get the integrable conditions of the structure equations
$$A_{ij}=A_{ji},~~~B_{ij}=B_{ji},$$
\begin{equation}\label{stru1}
A_{ij, k}-A_{ik, j}=B_{ij}C_k-B_{ik}C_j,
\end{equation}
\begin{equation}\label{stru2}
B_{ij,k}-B_{ik,
j}=\delta_{ij}C_k- \delta_{ik}C_j,
\end{equation}
\begin{equation}\label{stru3}
C_{i, j}-C_{j, i}=\sum_k(B_{ik}A_{kj}-B_{jk}A_{ki}),
\end{equation}
\begin{equation}\label{stru4}
R_{ijkl}=B_{il}B_{jk}-B_{ik}B_{jl}+A_{ik}\delta_{jl}+A_{jl}\delta_{ik}-A_{il}\delta_{jk}-A_{jk}\delta_{il}.
\end{equation}
Furthermore, we have
\begin{equation}\label{cond1}
\begin{split}
&\text{tr}(A)=\frac{1}{2n}( n^2\kappa-1),\quad
R_{ij}=\text{tr}(A)\delta_{ij}+(n-2)A_{ij}+\sum_kB_{ik}
B_{kj},\\
&(1-n)C_i=\sum_jB_{ij,j},\quad \sum_{ij}B_{ij}^2=\frac{n-1}{n}, \quad
\sum_i B_{ii}=0,
\end{split}
\end{equation}
where $\kappa$ is the normalized scalar curvature of $g$. From
(\ref{cond1}), we see that when $n\geq3$, all coefficients in the
structure equations are determined by the conformal
metric $g$ and the conformal second fundamental form $B$, thus we get the following conformal congruent
theorem,
\begin{theorem}
Two spacelike hypersurfaces
$x, \bar{x}: M^n\rightarrow M^{n+1}_1(c) (n\geq3)$ are conformally
equivalent if and only if there exists a diffeomorphism $\varphi:
M^n\rightarrow M^n$ which preserves the conformal metric  and the
conformal second fundamental form.
\end{theorem}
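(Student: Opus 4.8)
The plan is to prove the two implications by the moving-frame method of Lorentzian conformal geometry, using the congruence criterion proved just above: that $x$ and $\bar x$ are conformally equivalent if and only if $\bar Y = YT$ for some $T\in O(n+3,2)$, where $Y,\bar Y$ are the conformal position vectors. The forward implication is essentially a transport argument: starting from such a constant $T\in O(n+3,2)$, I would note first that $\langle d\bar Y,d\bar Y\rangle_2 = \langle (dY)T,(dY)T\rangle_2 = \langle dY,dY\rangle_2$, so the conformal metrics coincide and the underlying map $\varphi=\mathrm{id}_{M^n}$ preserves $g$. Since $N$ in (\ref{n1}) is built from $Y$ by the $g$-Laplacian and $\langle\,,\,\rangle_2$, both of which are unchanged, one gets $\bar N = NT$; choosing the orthonormal frame $\bar E_i = E_i$ forces $\bar Y_i = Y_i T$, hence $\bar{\mathbb V} = \mathbb V T$ and, after the unique matching choice of sign, $\bar\xi = \xi T$. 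Substituting into the structure equations (\ref{struct}) then yields $\bar B_{ij}=B_{ij}$, so $\varphi$ preserves the conformal second fundamental form.

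For the converse, after replacing $\bar x$ by $\bar x\circ\varphi$ we may assume $\varphi = \mathrm{id}$, so that $g = \bar g$ and $B=\bar B$ on $M^n$. The decisive point — and the only place $n\ge 3$ is used — is that the relations (\ref{cond1}) recover every coefficient appearing in (\ref{struct}) from $g$ and $B$ alone: $\text{tr}(A)$ from the scalar curvature of $g$; each $A_{ij}$ by solving $R_{ij} = \text{tr}(A)\delta_{ij} + (n-2)A_{ij} + \sum_k B_{ik}B_{kj}$, which is legitimate precisely because $n-2\neq 0$; each $C_i$ from $(1-n)C_i = \sum_j B_{ij,j}$; and the $\omega_{ij}$ as the Levi--Civita connection forms of $g$. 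Consequently the $\mathbb R^{n+3}_2$-valued frames $\Phi = (Y,N,Y_1,\dots,Y_n,\xi)^{\mathrm T}$ and $\bar\Phi = (\bar Y,\bar N,\bar Y_1,\dots,\bar Y_n,\bar\xi)^{\mathrm T}$, regarded as $(n+3)\times(n+3)$ matrices, satisfy one and the same linear system $d\Phi = \mathcal M\,\Phi$, where the matrix $\mathcal M$ of $1$-forms is determined by $(g,B)$.

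I then set $T := \Phi^{-1}\bar\Phi$. Differentiating and using $d\Phi = \mathcal M\Phi$, $d\bar\Phi = \mathcal M\bar\Phi$ gives at once $dT = 0$, so $T$ is locally constant; since the two ambiguities in the frame — the $O(n)$-gauge in $\{E_i\}$ and the sign of $\xi$, the latter pinned down by $B=\bar B$ — are applied simultaneously to $\Phi$ and $\bar\Phi$, the quantity $T$ is a well-defined single matrix on the connected manifold $M^n$, hence genuinely constant. Finally, both frames share the same Gram matrix with respect to $\langle\,,\,\rangle_2$, namely the normalization $\langle N,Y\rangle = 1$, $\langle Y_i,Y_j\rangle = \delta_{ij}$, $\langle\xi,\xi\rangle = -1$ with all remaining products zero; writing $J$ for the matrix of $\langle\,,\,\rangle_2$, this says $\Phi J\Phi^{\mathrm T} = \bar\Phi J\bar\Phi^{\mathrm T} = \Phi (TJT^{\mathrm T})\Phi^{\mathrm T}$, which forces $TJT^{\mathrm T} = J$, i.e.\ $T\in O(n+3,2)$. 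Since $\bar Y = YT$, the congruence criterion above gives that $x$ and $\bar x$ are conformally equivalent.

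I expect the main obstacle to be not the differential identity $dT=0$, which is routine, but the bookkeeping around the non-canonical choices: one must verify carefully that the sign of $\xi$ can be and is chosen consistently for $x$ and $\bar x$ once $B=\bar B$ is imposed, and that the gauge $\bar E_i = E_i$ is admissible, so that the locally constant $T$ patches to a single element of $O(n+3,2)$ over all of $M^n$. The other point requiring care is the solvability of (\ref{cond1}) for $A$ and $C$ in terms of $g$ and $B$, and it is exactly there that the hypothesis $n\ge 3$ is indispensable.
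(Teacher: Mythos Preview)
Your proposal is correct and follows exactly the approach the paper indicates: the paper itself does not give a detailed proof but merely observes that, by (\ref{cond1}), when $n\geq 3$ all coefficients in the structure equations (\ref{struct}) are determined by $g$ and $B$, and then states the theorem as an immediate consequence. Your write-up simply fills in the standard moving-frame argument (recovering $A$ and $C$ from (\ref{cond1}), showing $T=\Phi^{-1}\bar\Phi$ is constant and lies in $O(n+3,2)$) that the paper leaves implicit, including a correct identification of $n\geq 3$ as precisely the condition needed to solve for $A_{ij}$.
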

Next we give the relations between the conformal invariants and
isometric invariants of $x:M^n\rightarrow M^{n+1}_1(c)$.

First we consider the spacelike hypersurface in  $\mathbb{R}^{n+1}_1$. Let $\{e_1, \cdots , e_n\}$ be an orthonormal
local basis for the induced metric $I=<dx,dx>$ with dual basis
$\{\theta_1, \cdots , \theta_n\}$. Let $e_{n+1}$ be a normal vector
field  of $x$ , and $<e_{n+1},e_{n+1}>=-1$. Then we have the first
and second fundamental forms $I, II$ and the mean curvature $H$, $I=\sum_i\theta_i\otimes\theta_i,~
II=\sum_{ij}h_{ij}\theta_i\otimes\theta_j,~ H=\frac{1}{n}\sum_{i}
h_{ii}$. Denote $\Delta_{M}$ the Laplacian and $\kappa_{M}$ the
normalized scalar curvature for $I$.  By structure equation and
Gauss equation of $x:M^n\rightarrow \mathbb{R}^{n+1}_1$ we get that
\begin{equation}\label{hh}
\Delta_{M}x=nHe_{n+1}, \quad\kappa_{M}=\frac{-1}{n(n-1)}(n^2|H|^2
-|II|^2).
\end{equation}
For $x:M^n\rightarrow \mathbb{R}^{n+1}_1$, there is a lift
$$y:M^n\rightarrow C^{n+2},\quad y=( \frac{<x, x>+1}{2},x, \frac{<x, x>-1}{2})
.$$
It follows from (\ref{hh}) that
$$\langle\Delta Y, \Delta Y\rangle-n^2\kappa=\frac{n}{n-1}(-|II|^2+n| H|^2)=-e^{2\tau}.$$ Therefore the conformal metric and conformal position vector of $x$
\begin{equation}\label{g}
\begin{split}
&g=\frac{n}{n-1}(|II|^2-n|H|^2)<\text dx,\text dx>:=e^{2\tau}I,\\
&Y=\sqrt{\frac{n}{n-1}(|II|^2-n|H|^2)}( \frac{<x, x>+1}{2},x,
\frac{<x, x>-1}{2}).
\end{split}
\end{equation}
Let $E_i=e^{-\tau}e_i$, then $\{E_i|1\leq i\leq n\}$ are the local
orthonormal basis for $g$, and with the dual basis
$\omega_i=e^\tau\theta_i$. Let $$y_i =(<x, e_i>,e_i,  <x,
e_i)),~~y_{n+1}=(<x, e_{n+1}>,e_{n+1},  <x, e_{n+1}>).$$ By some
calculations we can obtain that
\begin{equation}\label{coff0}
\begin{split}
Y=e^\tau y,\quad Y_i=e^\tau(\tau_iy+y_i),\quad \xi=-H y+y_{n+1},\\
-e^\tau N=\frac{1}{2}(|\nabla \tau|^2-|H|^2)
 y+\sum_i\tau_i
y_i+Hy_{n+1}+( 1,\vec{0}, 1),
\end{split}
\end{equation}
where $\tau_i=e_i(\tau)$ and $|\nabla \tau|^2=\sum_i\tau_i^2.$
By a direct calculation we get the following expression of the
conformal invariants $A,B, C$:
\begin{equation}\label{coff}
\begin{split}
&A_{ij}=e^{-2\tau}[\tau_i\tau_j- h_{ij}H
-\tau_{i,j}+\frac{1}{2}(-|\nabla\tau|^2+| H|^2) \delta_{ij}],\\
&B_{ij}=e^{-\tau}(h_{ij}-H \delta_{ij}), \\
&C_i=e^{-2\tau}(H\tau_i-H_i
-\sum_{j}h_{ij}\tau_j),
\end{split}
\end{equation}
where $\tau_{i,j}$ is the Hessian of $\tau$ for $I$ and $H_
i=e_i(H)$.

Using the same methods we can obtain relations between the conformal
invariants and isometric invariants of $x:M^n\rightarrow
\mathbb{S}^{n+1}_1(1)$ and $x:M^n\rightarrow \mathbb{H}^{n+1}_1(-1)$. We have the
following unitied expression of the conformal invariants $A,B, C$:
\begin{equation}\label{coff1}
\begin{split}
&A_{ij}=e^{-2\tau}[\tau_i\tau_j-\tau_{i,j}- h_{ij}H
+\frac{1}{2}(-|\nabla\tau|^2+| H|^2+\epsilon) \delta_{ij}],\\
&B_{ij}=e^{-\tau}(h_{ij}-H \delta_{ij}),\\
&C_i=e^{-2\tau}(H\tau_i-H_i
-\sum_{j}h_{ij}\tau_j),
\end{split}
\end{equation}
where $\epsilon=1$ for $x:M^n\rightarrow S^{n+1}_1(1)$, and
$\epsilon=-1$ for $x:M^n\rightarrow H^{n+1}_1(-1)$.

Let $\{b_1,\cdots,b_n\}$ be the eigenvalues of the conformal second fundamental form $B$, which are called conformal principal curvatures. Let
$\{\lambda_1,\cdots,\lambda_n\}$ be the principal curvatures.
From (\ref{coff}) and (\ref{coff1}), we have
\begin{equation}\label{bcon}
b_i=e^{-\tau}(\lambda_i-H), ~~i=1,\cdots,n.
\end{equation}
Clearly the number of distinct conformal principal curvatures is the same as
that of principal curvatures of $x$. Further, from equations (\ref{bcon}), the M\"{o}bius curvatures
\begin{equation}\label{moecur}
\mathbb{M}_{ijk}=\frac{\lambda_i-\lambda_j}{\lambda_i-\lambda_k}=\frac{b_i-b_j}{b_i-b_k},
\end{equation}
Combining equations (\ref{coff}),  (\ref{coff1}) and (\ref{moecur}), we have,
\begin{PROPOSITION}\label{pro4}
Let $x:M^{n}\rightarrow M_1^{n+1}(c)$ be a spacelike  hypersurface. Then the principal vectors and the conformal principal curvatures are
invariant under the conformal transformations of $M_1^{n+1}(c)$. In particular, the M\"{o}bius curvatures are invariant under the conformal transformations of $M_1^{n+1}(c)$.
\end{PROPOSITION}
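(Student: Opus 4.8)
The plan is to reduce the statement to the fact, already recorded above, that the conformal metric $g$ and the conformal second fundamental form $B$ are conformal invariants, after first identifying the eigendata of $B$ with the principal‑curvature structure of $x$. Writing the first line of (\ref{coff1}) tensorially and using $\omega_i=e^{\tau}\theta_i$ together with $B_{ij}=e^{-\tau}(h_{ij}-H\delta_{ij})$, one gets $B=\sum_{ij}B_{ij}\,\omega_j\otimes\omega_i=e^{\tau}(II-H\,I)$, and since $g=e^{2\tau}I$, raising one index of $B$ by $g$ produces the $g$–selfadjoint endomorphism $B^{\sharp}=e^{-\tau}(\mathcal A-H\,\mathrm{Id})$, where $\mathcal A$ is the shape operator of $x$. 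Hence $B^{\sharp}$ and $\mathcal A$ share the same eigenspaces, namely the principal spaces of $x$, while the eigenvalues of $B^{\sharp}$ are the conformal principal curvatures $b_i=e^{-\tau}(\lambda_i-H)$, recovering (\ref{bcon}). Thus the principal spaces and the functions $b_i$ are intrinsically attached to the pair $(g,B)$; note that neither $\tau$ nor $H$ is conformally invariant on its own, so the substance of the proposition lies exactly in the fact that the combination $e^{-\tau}(\lambda_i-H)$ is.

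Next I would pass to a conformal transformation $\phi$ of $M^{n+1}_1(c)$ and set $\bar x=\phi\circ x$. By the congruence theorem stated right after Theorem \ref{t22}, the conformal position vectors satisfy $\bar Y=YT$ for some $T\in O(n+3,2)$; feeding this into (\ref{n1})--(\ref{struct}) and using that $T$ preserves $\langle\,,\,\rangle_2$, one obtains $\bar g=g$ and $\bar B=B$ as tensor fields on $M^n$ — equivalently, this is just the assertion recorded above that $g$ and $B$ are globally defined conformal invariants. Consequently the endomorphism $B^{\sharp}$ is unchanged by $\phi$, so by the previous paragraph its eigenspaces — the principal spaces — and its eigenvalues — the conformal principal curvatures $b_1,\dots,b_n$ — are invariant under $\phi$.

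Finally, by (\ref{moecur}) the Möbius curvature $\mathbb M_{ijk}=(b_i-b_j)/(b_i-b_k)$ is a function of the conformal principal curvatures alone, so its invariance follows immediately from the invariance of the $b_i$. The only delicate point is a bookkeeping one: the $b_i$ are defined only up to ordering, so ``invariance'' must be read as invariance of the unordered family of (principal space, conformal principal curvature) pairs; once the labels are fixed consistently on $M^n$, the identities $\bar b_i=b_i$ and $\bar{\mathbb M}_{ijk}=\mathbb M_{ijk}$ hold verbatim. I do not anticipate any real obstacle beyond this, since the essential input — invariance of $g$ and $B$ — is already in hand and the remaining work is just the linear‑algebra identification $B^{\sharp}=e^{-\tau}(\mathcal A-H\,\mathrm{Id})$.
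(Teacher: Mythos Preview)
Your proposal is correct and is essentially a careful expansion of the paper's own argument, which consists only of the line ``Combining equations (\ref{coff}), (\ref{coff1}) and (\ref{moecur}), we have'' before stating the proposition. Your identification $B^{\sharp}=e^{-\tau}(\mathcal A-H\,\mathrm{Id})$ and appeal to the invariance of $g$ and $B$ make explicit exactly what the paper leaves implicit in that cross-reference.
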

It is then rather easily seen from (\ref{coff}) and (\ref{coff1}) that, if
all conformal principal curvatures $\{b_i\}$ are constant, then
M\"{o}bius curvatures $\mathbb{M}_{ijk}$ are constant for all $1\leq i, j, k \leq n$. Vice versa,
\begin{PROPOSITION}\label{pro4}
Let $x:M^{n}\rightarrow M_1^{n+1}(c)$ be a spacelike  hypersurface with $r (\geq 3)$ distinct principal curvatures. Then the M\"{o}bius curvatures
$\mathbb{M}_{ijk}$ are constant if and only if the conformal principal curvatures $\{b_1,\cdots,b_n\}$ are constant.
\end{PROPOSITION}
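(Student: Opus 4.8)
The plan is to dispose of the ``if'' implication in a line and devote the work to the converse. Indeed, if all conformal principal curvatures $b_i$ are constant then by (\ref{moecur}) each M\"{o}bius curvature $\mathbb{M}_{ijk}=(b_i-b_j)/(b_i-b_k)$ is a ratio of constants; this direction uses nothing. The hypothesis $r\ge 3$ is essential only for the converse: when $r=2$ the M\"{o}bius curvatures are automatically constant, which is precisely why Theorem \ref{th1} produces a genuine family.

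For the converse, suppose every $\mathbb{M}_{ijk}$ is constant. First I would record the standard fact that, since the number $r$ of distinct principal curvatures is constant, the distinct conformal principal curvatures are smooth functions $\beta_1,\dots,\beta_r$ on a neighborhood of each point of $M^n$, with locally constant multiplicities $m_1,\dots,m_r$ and $\sum_s m_s=n$. Because $r\ge 3$, for each $s$ one can choose indices $i,j,k$ whose conformal principal curvatures are $\beta_1,\beta_s,\beta_2$ respectively, so that $t_s:=(\beta_1-\beta_s)/(\beta_1-\beta_2)$ equals one of the constant M\"{o}bius curvatures $\mathbb{M}_{ijk}$. Hence every conformal principal curvature is the affine combination
\begin{equation*}
\beta_s=(1-t_s)\beta_1+t_s\beta_2,\qquad s=1,\dots,r,
\end{equation*}
of the two functions $\beta_1,\beta_2$, where $t_1=0$, $t_2=1$, and $t_3,\dots,t_r$ are constants, pairwise distinct and different from $0$ and $1$ (since the $\beta_s$ are pairwise distinct).

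The next step is to combine this with the two universal identities extracted from (\ref{cond1}): diagonalizing the symmetric conformal second fundamental form $B$ gives $\sum_i b_i=\sum_i B_{ii}=0$ and $\sum_i b_i^2=\sum_{ij}B_{ij}^2=\frac{n-1}{n}$, that is
\begin{equation*}
\sum_{s=1}^r m_s\beta_s=0,\qquad \sum_{s=1}^r m_s\beta_s^2=\frac{n-1}{n}.
\end{equation*}
Substituting the affine expressions, the trace identity reads $a\beta_1+b\beta_2=0$ with $a=\sum_s m_s(1-t_s)$, $b=\sum_s m_s t_s$ and $a+b=n$. If $a\ne 0$, then $\beta_1=-(b/a)\beta_2$, so each $\beta_s=d_s\beta_2$ for suitable constants $d_s$; the norm identity becomes $\big(\sum_s m_s d_s^2\big)\beta_2^2=\frac{n-1}{n}$, and $\sum_s m_s d_s^2>0$ (it cannot vanish, otherwise all $\beta_s\equiv 0$, contradicting the norm identity), so $\beta_2^2$ is a positive constant; on the connected domain this makes $\beta_2$ constant, and with it all the $b_i$. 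If $a=0$, then $b=n$ and the trace identity forces $\beta_2=0$, hence $\beta_s=(1-t_s)\beta_1$ and the norm identity becomes $\big(\sum_s m_s(1-t_s)^2\big)\beta_1^2=\frac{n-1}{n}$ with $\sum_s m_s(1-t_s)^2\ge m_1>0$, so again $\beta_1$, and therefore every $b_i$, is constant. These two cases exhaust the possibilities.

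I expect no serious obstacle: the computation is short, and the only points to treat with some care are the preliminary smooth dependence and local constancy of the multiplicities of the distinct conformal principal curvatures (routine once $r$ is constant) and the degenerate branch $a=0$ of the trace relation, which the norm identity handles on the same footing as the generic branch. Connectedness of the local domain enters only to upgrade ``$\beta^2$ is a positive constant'' to ``$\beta$ is constant''.
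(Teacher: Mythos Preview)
Your proof is correct and rests on the same two ingredients as the paper's: the affine dependence among the conformal principal curvatures forced by constant M\"{o}bius curvatures, together with the normalization identities $\sum_i b_i=0$ and $\sum_i b_i^2=\frac{n-1}{n}$ from (\ref{cond1}). The execution differs slightly. You parametrize each $\beta_s$ as a constant affine combination of $\beta_1,\beta_2$ and then solve the two normalization equations for the functions $\beta_1,\beta_2$, which requires a small case split on whether the coefficient $a=\sum_s m_s(1-t_s)$ vanishes. The paper instead differentiates: from $\mathbb{M}_{ijk}$ constant one gets $\frac{X(b_i)-X(b_j)}{b_i-b_j}$ independent of $i,j$, hence $X(b_j)=\mu b_j+d$ for some $\mu,d$ depending on $X$; differentiating the two normalization identities then gives $nd=0$ and $\mu\sum_j b_j^2=0$, so $X(b_j)\equiv 0$ directly. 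The paper's route is a touch more streamlined (no case split, no appeal to connectedness to pass from $\beta^2$ constant to $\beta$ constant), while yours makes the algebraic structure more explicit; both are short and equally valid.
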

\begin{proof} It suffices to prove that the M\"{o}bius curvatures $\mathbb{M}_{ijk}$ are constant implies all conformal principal curvatures $b_i$ are constant.
First, for any tangent vector $X\in TM^n$, it is not hard to calculate that
$$\frac{X(b_i)-X(b_j)}{b_i-b_j}=\frac{X(b_i)-X(b_k)}{b_i-b_k}= \frac{X(b_j)-X(b_k)}{b_j-b_k} $$
from $\mathbb{M}_{ijk}$ being constant for all $1\leq i,j,k\leq n$.
Hence there exist $\mu$ and  $d$ such that
\begin{equation}\label{lb11}
X(b_j)=\mu b_j+d~~\text{for } j=1,\cdots,n.
\end{equation}
It is then immediate that \eqref{cond1} implies $d=0$ and $b_1X(b_1)+\cdots+b_nX(b_n)=0$, which implies $\mu=0$. Thus all $b_1,\cdots, b_n$ are constant.
\end{proof}

\par\noindent
\section{ Spacelike Dupin hypersurfaces in Lorentzian space forms}
Let $x:M^n\rightarrow M^{n+1}_1(c)$ be a spacelike Dupin hypersurface
in $M^{n+1}_1(c)$. For a principal curvature $\lambda$, we have principal space $\mathbb{D}_{\lambda}=\{X\in TM^n|\mathbb{A}X=\lambda X\}$. Then the spacelike hypersurface is Dupin if
and only if $X(\lambda)=0, X\in \mathbb{D}_{\lambda}$ for every principal curvature $\lambda$. The simple example of spacelike Dupin hypersurface
is the following isoparametric hypersurface in $M^{n+1}_1(c)$,
\begin{ex}\label{ex1}
 $\mathbb{H}^k(-a)\times\mathbb{R}^{n-k}
\subset\mathbb{R}^{n+1}_1,~~a>0,~~0\leq k\leq n.$
\end{ex}
\begin{ex}\label{ex2}
$\mathbb{S}^k(\sqrt{1+a^2})\times\mathbb{H}^{n-k}(-a)\subset\mathbb{S}^{n+1}_1,~~a>0,~~1\leq k\leq n.$
\end{ex}
\begin{ex}\label{ex3}
$\mathbb{H}^k(-a)\times\mathbb{H}^{n-k}(-\sqrt{1-a^2})\subset\mathbb{H}^{n+1}_1,~~0<a<1,~1\leq k\leq n.$
\end{ex}
In fact, these spacelike isoparametric  hypersurfaces are all spacelike isoparametric hypersurfaces in $M^{n+1}_1(c)$ (see \cite{li,ma2,x}). The following theorem
confirm that the spacelike Dupin hypersurface is conformally invariant.
\begin{theorem}
Let  $x:M^n\rightarrow M^{n+1}_1(c)$ be a spacelike Dupin hypersurface, and $\phi:M^{n+1}_1(c)\to M^{n+1}_1(c)$ a conformal transformation.
Then $\phi\circ x:M^n\rightarrow M^{n+1}_1(c)$ is a spacelike Dupin hypersurface.
\end{theorem}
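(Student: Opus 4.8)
The plan is to exploit the congruence theorem (Theorem 2.5) together with the explicit formula \eqref{bcon} relating conformal principal curvatures to principal curvatures. The key observation is that the Dupin condition can be rephrased entirely in terms of conformal data: namely, a spacelike hypersurface is Dupin if and only if each conformal principal curvature $b_i$ is constant along its corresponding conformal principal direction. Granting this reformulation, the theorem follows immediately, since a conformal transformation $\phi$ of $M^{n+1}_1(c)$ induces an element $T\in O(n+3,2)$ with $\overline{Y}=YT$, and Proposition 2.1 tells us the principal directions and conformal principal curvatures are unchanged; hence the Dupin condition, being expressed purely through these invariants, is preserved.

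The first step is therefore to verify the reformulation. Fix a principal curvature $\lambda$ of $x$ with principal space $\mathbb D_\lambda$, and let $b$ be the associated conformal principal curvature; by \eqref{bcon} we have $b = e^{-\tau}(\lambda - H)$ and, since the eigenspaces of $B$ and of the shape operator coincide (as $B_{ij} = e^{-\tau}(h_{ij} - H\delta_{ij})$), the conformal principal direction bundle for $b$ is exactly $\mathbb D_\lambda$ (more precisely, $E_i = e^{-\tau}e_i$ carries $e_i$-directions to the $g$-orthonormal frame). For a vector $X \in \mathbb D_\lambda$ one computes
$$
X(b) = X(e^{-\tau})(\lambda - H) + e^{-\tau}\bigl(X(\lambda) - X(H)\bigr).
$$
This does not obviously vanish when $X(\lambda)=0$, because of the $X(\tau)$ and $X(H)$ terms. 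The cleaner route is to argue directly with invariants: rather than trying to show $X(b)=0$, I will show that the property ``$x$ is Dupin'' transfers by using that $\phi\circ x$ and $x$ have conformally related metrics and \emph{the same} principal direction distributions and conformal principal curvatures (Proposition 2.1), so that the pair $(\text{distribution}, \text{function})$ entering the Dupin condition is literally the same object on $M^n$ for both hypersurfaces — only the ambient isometric data $(\tau, H, \lambda)$ differ. Thus the correct formulation is: ``Dupin'' is the condition that the integral submanifolds of each principal distribution $\mathbb D_{\lambda}$ (equivalently each conformal principal distribution) are contained in level sets of the corresponding $b_i$; and since $\phi$ fixes both the distributions and the $b_i$ as objects on $M^n$, this condition is manifestly preserved.

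Concretely, I would proceed as follows. First, note that being Dupin is a condition on curvature \emph{surfaces}: along each connected submanifold $S$ tangent to a principal space, $\lambda$ is constant. Second, invoke the structure-equation setup of Section 2 to recognise that the curvature surfaces of $x$ and of $\phi\circ x$ coincide as submanifolds of $M^n$ — this is because $\phi\circ x = T \circ (\sigma_c \circ x)$ lifted back, and $T$ is a linear isometry, so the conformal second fundamental form $B$ of $\phi\circ x$ equals that of $x$ (by Theorem 2.5, or directly from $\overline{Y}=YT$ and the structure equations \eqref{struct}), whence the eigendistributions of $B$ are identical. Third, since $B$ has the same eigenvalues $b_i$ for both, and by \eqref{bcon} the distinct-principal-curvature pattern and the multiplicities match, constancy of $b_i$ along a curvature surface is equivalent to constancy of $\lambda_i$ along it for \emph{each} of the two immersions. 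Therefore $x$ Dupin $\iff$ all $b_i$ are constant along curvature surfaces $\iff$ $\phi\circ x$ Dupin. That $\phi\circ x$ is again spacelike follows because $T$ preserves the signature, so $(\sigma_c\circ(\phi\circ x))_*(TM^n)$ remains positive definite in $T\mathbb Q^{n+1}_1$, which is exactly the spacelike condition.

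The main obstacle is the bookkeeping in Step 2: one must check carefully that $\overline{Y}=YT$ with $T\in O(n+3,2)$ forces $\overline{B}=B$ (not merely conformally related $B$'s), so that the eigendistributions — and not just the eigenvalue ratios — are genuinely identical on $M^n$. This is essentially the content of the congruence theorem, but it requires being precise that the moving frame $\{Y,N,Y_i,\xi\}$ transforms by $T$ as a whole, so all coefficients $A_{ij}, B_{ij}, C_i$ are unchanged. Once that is in hand, everything else is formal. For $n=2$ (where Proposition 2.3's hypotheses fail and $B$ alone need not determine the hypersurface), one falls back on the original definition directly: a conformal transformation of a $2$-dimensional ambient acts on curvature lines of a surface, and the Dupin condition for surfaces is classically conformally invariant by the same frame argument applied to the $\mathbb Q^3_1$ model.
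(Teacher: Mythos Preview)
Your argument has a genuine gap at the crucial step. You claim in your third step that ``constancy of $b_i$ along a curvature surface is equivalent to constancy of $\lambda_i$ along it'', but this equivalence is false in general. You already computed in your second paragraph that for $X\in\mathbb D_{\lambda_i}$,
\[
X(b_i)=X(e^{-\tau})(\lambda_i-H)+e^{-\tau}\bigl(X(\lambda_i)-X(H)\bigr),
\]
and correctly observed that the right-hand side need not vanish when $X(\lambda_i)=0$, because of the $X(\tau)$ and $X(H)$ terms. Switching to the language of conformal invariants does not remove this obstruction: the tensor $B$ alone does not encode the Dupin condition, so knowing that $B$ (and hence the $b_i$ and the eigendistributions) coincide for $x$ and $\phi\circ x$ is insufficient. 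Concretely, there exist Dupin hypersurfaces for which $E_i(b_i)\neq 0$ (any Dupin hypersurface with nonzero conformal $1$-form $C$ will do), so your proposed reformulation ``Dupin $\iff$ all $b_i$ constant along curvature surfaces'' is simply not correct.

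What is missing is the conformal $1$-form $C$. The paper's proof establishes, by a direct manipulation of the expression for $C_i$ in terms of $H$, $\tau$, and $h_{ij}$ (from the relations between conformal and isometric invariants), the identity
\[
C_i=E_i(b_i)-e^{-\tau}E_i(\lambda_i).
\]
This shows that $x$ is Dupin (i.e.\ $E_i(\lambda_i)=0$ for all $i$) if and only if $C_i=E_i(b_i)$ for all $i$. The latter condition is stated purely in terms of the conformal invariants $C$, $B$, and the conformal metric $g$, all of which you correctly note are unchanged under $Y\mapsto YT$. You already listed $C_i$ among the invariants preserved by $T$, but never brought it into the argument; once you do, the proof is immediate. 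The moral is that the ``defect'' between $E_i(b_i)$ and $e^{-\tau}E_i(\lambda_i)$ is itself a conformal invariant, namely $C_i$, and this is precisely the missing ingredient.
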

\begin{proof}
Let $\{\lambda_1,\lambda_2,\cdots,\lambda_n\}$ denote its principal curvature, and
$\{e_1,e_2,\cdots,e_n\}$ be the orthonormal basis for $TM^n$ with the induced metric, consisting of unit principal vectors.
Therefore $\{E_1=e^{\tau}e_1, E_2=e^{\tau}e_2,\cdots,E_n=e^{\tau}e_n\}$ is the orthonormal basis for $TM^n$ with respect to the conformal metric $g$,
and $\{b_1=e^{-\tau}(\lambda_1-H),\cdots,b_n=e^{-\tau}(\lambda_n-H)\}$ are the conformal principal curvatures.
From (\ref{coff}) and (\ref{coff1}), we have
\begin{equation}\label{dupin}
\begin{split}
C_i&=e^{-\tau}(-e^{-\tau}H_i+\sum_j(h_{ij}-H\delta_{ij})(e^{-\tau})_j)\\
&=e^{-\tau}(-e^{-\tau}H_i+\sum_je_j((h_{ij}-H\delta_{ij})e^{-\tau})-e^{-\tau}\sum_je_j(h_{ij}-H\delta_{ij}))\\
&=e^{-\tau}(\sum_je_j(B_{ij})-\sum_je^{-\tau}He_j(h_{ij}))\\
&=E_i(b_i)-e^{-\tau}E_i(\lambda_i).
\end{split}
\end{equation}
Noting that the principal vectors are conformal invariants, Therefore $x$ is Dupin if and only if $C_i=E_i(b_i)$, which is invariant under the conformal transformation of $M^{n+1}_1(c)$.
\end{proof}
From equation (\ref{dupin}) and Proposition \ref{pro4}, the spacelike  Dupin hypersurfaces with constant M\"{o}bius curvatures can be completely characterized in terms of M\"{o}bius invariants,
namely,
\begin{theorem} \label{is1} Let $x:M^{n}\rightarrow M_1^{n+1}(c)$ be a spacelike Dupin hypersurface with $r (\geq 3)$ distinct principal curvatures.
Then the M\"{o}bius curvatures are constant if and only if the conformal 1-form vanishes and the conformal principal curvatures
are constant.
\end{theorem}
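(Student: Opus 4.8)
The plan is to deduce the statement by chaining two facts already established: equation \eqref{dupin} (together with the observation that a spacelike hypersurface is Dupin precisely when $E_i(\lambda_i)=0$ for all $i$), which for a Dupin hypersurface reads $C_i=E_i(b_i)$; and Proposition \ref{pro4}, which asserts that when $r\ge3$ the M\"obius curvatures are constant if and only if the conformal principal curvatures $b_1,\dots,b_n$ are constant. No new computation is required; one only has to assemble these correctly and dispatch the essentially trivial converse.

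For the ``only if'' implication, suppose the M\"obius curvatures $\mathbb M_{ijk}$ are constant. By Proposition \ref{pro4} the conformal principal curvatures $b_1,\dots,b_n$ are constant on $M^n$, so $E_i(b_i)=0$ for each $i$. Since $x$ is Dupin, the principal curvature $\lambda_i$ is constant along its curvature surface, i.e. $e_i(\lambda_i)=0$, hence $E_i(\lambda_i)=e^{-\tau}e_i(\lambda_i)=0$; feeding both vanishings into \eqref{dupin} gives $C_i=E_i(b_i)-e^{-\tau}E_i(\lambda_i)=0$ for every $i$. Thus the conformal $1$-form $C=\sum_iC_i\omega_i$ vanishes identically, and since the $b_i$ are constant the conclusion follows.

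For the ``if'' implication, assume the conformal $1$-form vanishes and the conformal principal curvatures are constant. Then by \eqref{moecur} each $\mathbb M_{ijk}=(b_i-b_j)/(b_i-b_k)$ is a quotient of differences of constants, hence constant. Only the constancy of the $b_i$ is used here; the hypothesis $C\equiv0$ is listed merely to mirror the other direction.

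There is no genuine obstacle: the substance of the theorem is already contained in \eqref{dupin} and in Proposition \ref{pro4}. The one place that warrants an explicit line is the step $E_i(\lambda_i)=0$, which should be justified from the very definition of Dupin hypersurface — $X(\lambda)=0$ for every $X\in\mathbb D_\lambda$ — applied to the unit principal vector $e_i\in\mathbb D_{\lambda_i}$; once that is in place, equation \eqref{dupin} does all the remaining work.
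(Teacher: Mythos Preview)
Your proposal is correct and follows exactly the route the paper takes: the paper does not give a separate proof of Theorem~\ref{is1} but simply records it as the consequence of equation~\eqref{dupin} (equivalently, the characterization ``$x$ is Dupin if and only if $C_i=E_i(b_i)$'' derived right after it) together with Proposition~\ref{pro4}. Your write-up just makes explicit the two-line chaining that the paper leaves to the reader, including the observation that the hypothesis $C\equiv 0$ is redundant in the ``if'' direction.
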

As a consequence of Proposition \ref{pro4}, one easily derives
\begin{corollary}\label{corm}
A spacelike Dupin hypersurface with constant M\"{o}bius curvatures is always proper.
\end{corollary}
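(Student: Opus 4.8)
The plan is to derive the constancy of $r$ purely from the constancy of the conformal principal curvatures, using the globally defined conformal second fundamental form $B$ as the bookkeeping device.

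First I would apply Proposition~\ref{pro4} (the one on hypersurfaces with $r\ge 3$ distinct principal curvatures): since the M\"{o}bius curvatures $\mathbb{M}_{ijk}$ are constant, the conformal principal curvatures $b_1,\dots,b_n$ are constant on $M^n$. Next, recall from the remark immediately following \eqref{bcon} that at every point the number of distinct conformal principal curvatures equals the number $r$ of distinct principal curvatures; equivalently, $r(p)$ is the number of distinct roots of the characteristic polynomial $\det\!\big(t\,\mathrm{Id}-B(p)\big)$, whose coefficients are the elementary symmetric functions of $b_1(p),\dots,b_n(p)$. As the $b_i$ are constant, these coefficients are constant, so this polynomial and, in particular, its number of distinct roots do not depend on $p$. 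Hence $r$ is constant on $M^n$, which is precisely the statement that the spacelike Dupin hypersurface $x$ is proper.

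The point that needs care is that Proposition~\ref{pro4} is phrased for a hypersurface with a fixed number $r\ge 3$ of distinct principal curvatures, and its proof differentiates a smooth enumeration $b_1,\dots,b_n$; at a point where the number of distinct principal curvatures jumps, no such smooth enumeration exists. I would handle this in the standard way: restrict first to the open dense set $M_0\subseteq M^n$ on which $p\mapsto\#\{\text{distinct principal curvatures at }p\}$ is locally constant --- this set is open and dense because that function, being the number of distinct eigenvalues of the continuous tensor $B$, is lower semicontinuous. Since the hypothesis ``constant M\"{o}bius curvatures'' already requires at least three distinct conformal principal curvatures at every point, every component of $M_0$ carries smooth enumerations of the $b_i$ to which Proposition~\ref{pro4} applies, giving constant $b_i$ there; continuity of the eigenvalue multiset of $B$ across $M^n\setminus M_0$ together with connectedness of $M^n$ then forces these constants to agree on all components, whence $B$ has the same eigenvalue multiset everywhere, $M_0=M^n$, and $r$ is globally constant. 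This semicontinuity argument is the only real obstacle; the rest is immediate from Proposition~\ref{pro4} and \eqref{bcon}.
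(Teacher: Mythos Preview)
Your approach is exactly the paper's: the paper records the corollary as an immediate consequence of Proposition~\ref{pro4} (constant M\"{o}bius curvatures $\Rightarrow$ constant conformal principal curvatures $\Rightarrow$ constant $r$), and your first paragraph spells this out; your second paragraph supplies rigor the paper does not attempt.

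One technical point in that added rigor deserves tightening. You write that ``continuity of the eigenvalue multiset of $B$ across $M^n\setminus M_0$ together with connectedness of $M^n$ then forces these constants to agree on all components.'' Continuity alone does not do this: a continuous function on a connected space that is locally constant on a dense open subset need not be globally constant (the Cantor staircase is locally constant on the dense open complement of the Cantor set in $[0,1]$). What makes your argument work is that the coefficients of the characteristic polynomial $\det(t\,\mathrm{Id}-B)$ are \emph{smooth} functions on $M^n$; their differentials vanish on the dense set $M_0$, hence vanish identically, so the coefficients are constant on the connected manifold $M^n$. With ``continuity'' replaced by ``smoothness'' in that step, your argument is complete.
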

Like as Pinkall's method in \cite{pink2}, we construct a new spacelike Dupin hypersurface
from a spacelike Dupin hypersurface.
\begin{PROPOSITION}
Let $u:M^k\to \mathbb{R}^{k+1}_1$ be an immersed  spacelike hypersurface. The cylinder over $u$ is defined as following
$$x:M^k\times \mathbb{R}^{n-k}\to \mathbb{R}^{k+1}_1\times \mathbb{R}^{n-k}=\mathbb{R}^{n+1}_1,~~~x(p,y)=(u(p),y).$$
If $u$ is a Dupin hypersurface, then cylinder $x$ is a spacelike Dupin hypersurface.
\end{PROPOSITION}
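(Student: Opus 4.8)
The plan is to analyze the principal curvatures of the cylinder $x(p,y)=(u(p),y)$ directly from the product structure of the immersion, then verify the Dupin condition curvature-surface by curvature-surface. First I would set up the geometry: since $\mathbb{R}^{n+1}_1=\mathbb{R}^{k+1}_1\times\mathbb{R}^{n-k}$ splits as a Riemannian product with the Lorentzian factor carrying $u$, the unit normal of $x$ at $(p,y)$ is simply $(e_{k+1}(p),\vec 0)$, where $e_{k+1}$ is the (timelike) unit normal of $u$ in $\mathbb{R}^{k+1}_1$; note $x$ is spacelike because $u$ is and the extra $\mathbb{R}^{n-k}$ factor is positive definite. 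Consequently the shape operator $\mathcal A$ of $x$ is block diagonal with respect to the splitting $T_pM^k\oplus\mathbb{R}^{n-k}$ of $T_{(p,y)}(M^k\times\mathbb{R}^{n-k})$: on the first block it equals the shape operator $\mathcal A_u$ of $u$ (pulled back), and on the second block it is identically zero. Hence the principal curvatures of $x$ are $\lambda_1(p),\dots,\lambda_k(p)$, the principal curvatures of $u$, together with $0$ repeated $n-k$ times.

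Next I would identify the curvature surfaces of $x$. For a principal curvature $\lambda_i(p)\neq 0$ coming from $u$, the corresponding principal subbundle of $x$ at $(p,y)$ is exactly $\mathbb{D}^u_{\lambda_i}(p)\oplus 0\subset T_pM^k\oplus\mathbb{R}^{n-k}$, so a curvature surface $S\subset M^k\times\mathbb{R}^{n-k}$ tangent to it projects (under the first projection) to a curvature surface $S'\subset M^k$ of $u$, and along $S'$ the value $\lambda_i$ is constant because $u$ is Dupin; since $\lambda_i$ depends only on $p$, it is constant along $S$ as well. For the principal curvature $0$, the associated principal subbundle at $(p,y)$ contains $\mathbb{R}^{n-k}$ and also any $\mathbb{D}^u_0(p)$ if $u$ happens to have $0$ as a principal curvature; in either case the principal curvature $0$ is trivially constant along any curvature surface. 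The one subtlety to be careful about: the principal spaces of $x$ are genuinely constant-dimensional only where the number of distinct principal curvatures of $u$ is locally constant; in general curvature surfaces and the Dupin property are defined pointwise along connected integral submanifolds, and the argument above is local and works on each such submanifold regardless.

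The main obstacle, such as it is, is bookkeeping rather than conceptual: one must be slightly careful about the case where $u$ itself has $0$ as a principal curvature, since then the zero principal space of $x$ is the direct sum $\mathbb{D}^u_0(p)\oplus\mathbb{R}^{n-k}$ and one has to observe that the restriction to this larger distribution still has constant (namely zero) associated principal curvature, which is automatic. I would close by remarking that the formula $\mathcal A = \mathcal A_u\oplus 0$ shows the number of distinct principal curvatures of $x$ is either $r$ or $r+1$, where $r$ is that of $u$, so that when $u$ is proper Dupin with all principal curvatures nonzero the cylinder is proper Dupin with one additional curvature $0$ of multiplicity $n-k$ — this is exactly the building-cylinder step invoked in the Introduction for realizing prescribed multiplicities $m_1,\dots,m_r$.
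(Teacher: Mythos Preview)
Your argument is correct. The paper itself does not supply a proof of this proposition at all: it simply states the cylinder construction (and the companion cone construction in the next proposition) as adaptations of Pinkall's classical method \cite{pink2}, and then records the consequence that one can realize any prescribed multiplicity pattern. Your direct computation --- identifying the normal as $(e_{k+1}(p),0)$, reading off the block-diagonal shape operator $\mathcal{A}_u\oplus 0$, and checking the Dupin condition separately for the curvatures inherited from $u$ and for the new zero curvature --- is exactly the standard argument that the paper is tacitly invoking, and your treatment of the edge case where $u$ already has $0$ as a principal curvature is the right way to be careful. In short, you have written out the proof the paper chose to omit.
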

\begin{PROPOSITION}
Let $u:M^k\to \mathbb{S}^{k+1}_1$ be an immersed  spacelike hypersurface. The cone over $u$ is defined as following
$$x:M^k\times R^+\times \mathbb{R}^{n-k-1}\to \mathbb{R}^{n+1}_1,~~~x(p,t,y)=(tu(p),y).$$
If $u$ is a Dupin hypersurface, then cone $x$ is a spacelike Dupin hypersurface.
\end{PROPOSITION}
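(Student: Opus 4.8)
The plan is to reduce the computation to a direct analysis of the shape operator of the cone $x(p,t,y)=(tu(p),y)$ and to track principal curvatures along curvature surfaces. First I would set up a natural frame: at a point $(p,t,y)$, the tangent space to $M^k\times\mathbb R^+\times\mathbb R^{n-k-1}$ splits as $t\cdot du(T_pM^k)\oplus \mathbb R\,\partial_t\oplus T_y\mathbb R^{n-k-1}$, where the first block is tangent to the scaled copy of $u(M^k)\subset\mathbb S^{k+1}_1\subset\mathbb R^{k+2}_1$ sitting inside the $\mathbb R^{k+2}_1$-factor, the $\partial_t$ direction corresponds to the radial (cone-generator) direction, and the last block is the flat $\mathbb R^{n-k-1}$ slab. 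A unit normal $e_{n+1}$ of $x$ is obtained from the unit normal $\nu$ of $u$ in $\mathbb S^{k+1}_1$: since $u$ lies in the unit de Sitter space, the position vector $u$ is the outward normal of $\mathbb S^{k+1}_1$ and $\nu\perp u$, so $(\nu(p),0)$ is a well-defined unit normal for $x$ that is independent of $t$.

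Next I would compute the second fundamental form of $x$ block by block. On the $\mathbb R^{n-k-1}$ slab the normal is constant, so these directions are principal with principal curvature $0$. The radial direction $\partial_t$ sends the normal $(\nu(p),0)$ to $0$ (the normal does not depend on $t$), so $\partial_t$ is also principal with principal curvature $0$; note $\partial_t$ and the flat slab together give the curvature surface for $\lambda=0$, and $0$ is trivially constant along it. On the $t\cdot du(T_pM^k)$ block, if $\mu_1(p),\dots,\mu_k(p)$ are the principal curvatures of $u$ in $\mathbb S^{k+1}_1$ with unit principal directions $v_1,\dots,v_k$, then the corresponding principal curvatures of $x$ are $\mu_i(p)/t$: the scaling by $t$ rescales the induced metric on that block by $t^2$ while the derivative of the ($t$-independent) normal picks up exactly one factor of $t$ less, and the curvature of $u$ as a hypersurface of $\mathbb S^{k+1}_1$ (as opposed to of $\mathbb R^{k+2}_1$) is what survives because the extra $\mathbb S^{k+1}_1$-normal direction $u$ is perpendicular to $e_{n+1}$. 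So the distinct principal curvatures of $x$ are $0$ together with $\{\mu_i(p)/t\}$, and the principal directions of $x$ are $\partial_t$, the flat-slab directions, and the images $t\,du(v_i)$.

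It remains to verify the Dupin condition for $x$, i.e.\ constancy of each principal curvature along its own curvature surface. For $\lambda=0$ this is immediate. Fix a principal curvature $\mu$ of $u$ and its curvature surface $S\subset M^k$; since $u$ is Dupin, $\mu$ is constant on $S$. The corresponding curvature surface of $x$ is $S\times\{t\}$ for each fixed $t$ (the $\partial_t$ direction belongs to the $\lambda=0$ eigenspace, hence is not mixed in unless $\mu=0$, and the flat slab likewise only enters the $\lambda=0$ space), and along it the principal curvature is $\mu/t$, which is constant because $\mu$ is constant on $S$ and $t$ is fixed. Hence $x$ is Dupin. The one point requiring care—and the main obstacle—is the bookkeeping when some $\mu_i$ happens to vanish, so that the $t\,du(v_i)$ direction merges into the $\lambda=0$ eigenspace together with $\partial_t$ and the flat slab; one must check that the enlarged curvature surface is still of the product form and that the principal curvature ($=0$) stays constant along it, which it does. (Here one invokes the earlier observation that curvature surfaces of a Dupin hypersurface are smooth submanifolds integrating the principal distributions, and that the product structure of $x$ makes these distributions integrable with leaves of the claimed form.) The cylinder proposition is handled identically, with the de Sitter factor replaced by the flat $\mathbb R^{k+1}_1$ and no rescaling, so that the principal curvatures of $x$ are literally those of $u$ together with $0$.
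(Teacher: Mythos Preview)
Your proposal is correct and follows the same approach the paper sketches: the paper does not give a detailed proof of this proposition, only remarking afterwards that the cone construction introduces a new principal curvature of multiplicity $n-k$ that is constant along its curvature surface while the remaining principal curvatures are determined by those of $M^k$ (and Example~3.4 carries out exactly your block computation of $I$ and $II$ in the special case of the cone over $\mathbb H^{q}\times\mathbb S^{p}$). Your explicit identification of the unit normal as $(\nu,0)$, the principal curvatures as $\mu_i/t$ and $0$, and the verification of the Dupin condition along the product curvature surfaces are precisely what is needed to flesh out that sketch.
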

In general, these constructions introduce a new principal curvature of multiplicity $n-k$
which is constant along its curvature surface. The other principal curvatures are
determined by the principal curvatures of $M^k$, and the Dupin property is preserved
for these principal curvatures. Using these constructions we have the following result,
\begin{theorem}
Given positive integers $v_1,v_2,\cdots,v_r$¦Í with
$$v_1+v_2+\cdots+v_r=n.$$
there exists a proper spacelike Dupin hypersurface in $\mathbb{R}^{n+1}_1$ with $r$ distinct principal curvatures
having respective multiplicities $v_1,v_2,\cdots,v_r$.
\end{theorem}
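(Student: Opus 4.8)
The plan is to build the desired hypersurface by an iterated application of the two preceding constructions (the cylinder over a Dupin hypersurface in $\mathbb{R}^{k+1}_1$ and the cone over a Dupin hypersurface in $\mathbb{S}^{k+1}_1$), inducting on the number $r$ of distinct principal curvatures. First I would record the base case: if $r=1$, a totally umbilic spacelike hyperplane (or piece of $\mathbb{H}^n(-a)$) in $\mathbb{R}^{n+1}_1$ is a proper Dupin hypersurface with one principal curvature of multiplicity $n=v_1$. Equally, Example \ref{ex1} with $k=n$ gives a concrete model. This handles the start of the induction, and it also pins down the key geometric fact that will be reused: a spacelike totally umbilic hypersurface has one principal curvature which is automatically constant along its (single) curvature surface.

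For the inductive step, suppose the statement holds in dimension $n-v_r$ for $r-1$ multiplicities. Then there is a proper spacelike Dupin hypersurface $u:W^{n-v_r}\to \mathbb{R}^{n-v_r+1}_1$ with distinct principal curvatures of multiplicities $v_1,\dots,v_{r-1}$. Apply the cylinder construction of the first Proposition with $k=n-v_r$: the map $x(p,y)=(u(p),y)$ gives a spacelike Dupin hypersurface $W^{n-v_r}\times\mathbb{R}^{v_r}\to\mathbb{R}^{n+1}_1$. By the remark following the two Propositions, this introduces exactly one new principal curvature, of multiplicity $n-(n-v_r)=v_r$, constant along its curvature surface, while the old principal curvatures of $u$ and their multiplicities $v_1,\dots,v_{r-1}$ are unchanged, and the Dupin property is preserved for all of them. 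So the resulting hypersurface is Dupin with principal curvatures of multiplicities $v_1,\dots,v_{r-1},v_r$. The only subtlety is that we need the new principal curvature to be genuinely distinct from the others and the total number of distinct principal curvatures to be constant (i.e. properness). The simplest way to guarantee this is to choose the ambient totally geodesic $\mathbb{R}^{v_r}$ factor so that the cylinder's flat direction has principal curvature $0$, and to arrange (by a conformal/homothetic normalization, or by translating in a suitable Lorentzian direction, which is allowed since Dupin is conformally invariant by the Theorem above) that $0$ is not among the principal curvatures of $u$; alternatively use the cone construction instead of the cylinder when $0$ does occur. Properness then follows because each principal curvature is constant on an open-and-closed set of the product, so the count of distinct values is locally constant.

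The main obstacle is the bookkeeping of properness and distinctness across the induction: one must be sure that at each stage the newly created principal curvature does not accidentally coincide with one of the inherited ones on some subset, which would drop the count of distinct principal curvatures and break properness. I would deal with this by always introducing the new curvature as the $0$ curvature of a cylinder after a preliminary normalization ensuring $0\notin\{\text{principal curvatures of the previous stage}\}$, or — more symmetrically — by working with conformal principal curvatures $b_i$ and using the conformal invariance of the Dupin condition together with the freedom in $O(n+3,2)$ to separate the spectra. Once distinctness is arranged, properness is immediate from the product structure. A secondary (routine) point is to verify that the constructions keep the hypersurface spacelike: the cylinder over a spacelike $u$ is spacelike because the added $\mathbb{R}^{v_r}$ is a positive-definite factor of $\mathbb{R}^{n+1}_1$, and likewise for the cone one checks the induced metric on $M^k\times\mathbb{R}^+\times\mathbb{R}^{n-k-1}$ stays positive definite using that $u$ maps into the de Sitter space $\mathbb{S}^{k+1}_1$; these are the computations underlying the two Propositions and need not be repeated. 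Assembling the base case with $r-1$ iterations of the cylinder construction then yields the claimed proper spacelike Dupin hypersurface in $\mathbb{R}^{n+1}_1$ with $r$ distinct principal curvatures of multiplicities $v_1,\dots,v_r$.
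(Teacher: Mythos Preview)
Your proposal is correct and follows exactly the approach the paper intends: the paper gives no detailed proof of this theorem at all, simply stating after the two Propositions that ``using these constructions we have the following result,'' in direct analogy with Pinkall's argument in \cite{pink2}. Your inductive application of the cylinder (and, as a fallback, cone) construction makes this implicit argument explicit, and your attention to the distinctness of the newly introduced principal curvature and to properness fills in details the paper leaves to the reader.
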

Next we give the spacelike Dupin hypersurface which is not isoparametric in $M^{n+1}_1(c)$.
\begin{ex}\label{ex4}
Let $R^+$ be the half line of positive real numbers. For any two given natural numbers $p,q$ with $p+q<n$ and
a real number $a>1$, consider the hypersurface of warped product embedding
$$
 x:\mathbb H^{q}(\sqrt{a^2-1})\times\mathbb S^{p}(a)\times
 \mathbb R^+\times\mathbb R^{n-p-q-1}\rightarrow
 \mathbb R^{n+1}_1 ,$$ defined by $$
  x(u',u'',t,u''')=(tu',tu'',u'''),$$ where $u'\in \mathbb H^{q}(\sqrt{a^2-1}), u''\in\mathbb S^{p}(a),u'''\in\mathbb R^{n-p-q-1}.$
 \end{ex}
Let $b=\sqrt{a^2-1}$. One of the normal vector of $x$ can be taken as
$$e_{n+1}=(\frac{a}{b}u',\frac{b}{a}u'',0).$$
The first and second fundamental form of $x$ are given by
$$I=t^2(<du',du'>+du''\cdot du'')+dt\cdot dt+du'''\cdot du''',$$
$$II=-<dx,de_{n+1}>=-t(\frac{a}{b}<du',du'>+\frac{b}{a}du''\cdot du'').$$
Thus the mean curvature of $x$ $$H=\frac{-pb^2-qa^2}{nabt},$$
and $e^{2\tau}=\frac{n}{n-1}[\sum_{ij}h^2_{ij}-nH^2]=\frac{p(n-p)b^4-2pqa^2b^2+q(n-q)a^4}{(n-1)t^2}:=\frac{c^2}{t^2}.$

From (\ref{coff0}) and (\ref{coff1}), the conformal 1-form $C=0$, and the conformal metric and the conformal second fundamental form of $x$ are given by
\begin{equation}
\begin{split}
g&=c^2<du',du'>+c^2du''\cdot du''+\frac{c^2}{t^2}(dt\cdot dt+du'''\cdot du''')=g_1+ g_2+ g_3,\\
(B_{ij})&=(\underbrace{b_1,\cdots,b_1}_{q},\underbrace{b_2,\cdots,b_2}_{p},\underbrace{b_3,\cdots,b_3}_{n-p-q}),
\end{split}
\end{equation}
where $b_1=\frac{pb^2-(n-q)a^2}{nabc},~b_2=\frac{qa^2-(n-p)b^2}{nabc},~b_3=\frac{pb^2+qa^2}{nabc}.$

Therefore the embedding hypersurface $x$ is a spacelike Dupin hypersurface with three constant conformal principal curvatures.

Furthermore, the sectional curvatures of $(\mathbb H^{q}(\sqrt{a^2-1}), g_1)$, $(\mathbb S^{p}(a), g_2)$ and $(\mathbb R^+\times\mathbb R^{n-p-q-1}, g_3)$
are constant.

\par\noindent
\section{The proof of Theorem \ref{th1}}
To prove Theorem \ref{th1}, we need the following Lemma.
\begin{lemma}\label{abc1}
Let $x:M^n\rightarrow M^{n+1}_1(c)$ be a spacelike hypersurface
without umbilical points. If conformal invariants of $x$ satisfy
$C=0,$ and $ A=\mu B+\lambda g$
for some constant $\mu, \lambda$. Then $x$ is conformally equivalent to a spacelike hypersurface with
constant mean curvature and constant scalar curvature.
\end{lemma}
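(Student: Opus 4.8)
The plan is to use the conformal group freedom to produce a suitable lift, and then to exploit the algebraic condition $A = \mu B + \lambda g$ together with $C = 0$ to force the isometric invariants (mean curvature and scalar curvature) of that lift to be constant. Concretely, I would first recall from Section 2 that the conformal position vector $Y : M^n \to C^{n+2}$ and the frame $\{Y, N, Y_1, \dots, Y_n, \xi\}$ satisfy the structure equations \eqref{struct}; the hypothesis $C = 0$ immediately kills the $\xi$-terms in $\mathrm{d}N$ and the $Y$-terms in $\mathrm{d}\xi$. The strategy is then to find a constant vector in $\mathbb{R}^{n+3}_2$ built out of the frame and the constants $\mu, \lambda$. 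Guided by the unified formulas \eqref{coff1} for $A, B, C$ in terms of $\tau$, $h_{ij}$, $H$, a natural candidate is a linear combination of the form $P = Y + \mu\, \xi + (\text{scalar})\, N$ (the scalar fixed by $\lambda$ and $\mathrm{tr}(A)$). Differentiating $P$ using \eqref{struct} and substituting $A_{ij} = \mu B_{ij} + \lambda \delta_{ij}$ (recall $g = \sum \omega_i^2$, so the metric coefficients are $\delta_{ij}$) should make $\mathrm{d}P = 0$, i.e. $P$ is a fixed vector of $\mathbb{R}^{n+3}_2$.

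Once $P$ is constant, its causal character $\langle P, P\rangle_2$ is a constant, and the three cases $\langle P, P\rangle_2 < 0$, $= 0$, $> 0$ correspond (up to an element of $O(n+3,2)$, which by the congruence theorems of Section 2 realizes a conformal transformation) to the three model embeddings $\sigma_c$ of \eqref{lor} into $\mathbb{Q}^{n+1}_1$. Choosing $T \in O(n+3,2)$ that sends $P$ to a standard vector, the transformed hypersurface $\bar x = T \circ x$ lies in the space form $M^{n+1}_1(c')$ determined by $\langle P, P\rangle_2$, and the constant $\langle P, P\rangle_2$ together with the relation $P = \bar Y + \mu \bar\xi + (\cdots)\bar N$ translates, via \eqref{coff0}–\eqref{coff1}, into an explicit equation among $e^\tau$, $H$ and $|\nabla\tau|^2$. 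The cleanest route is: after normalization one gets $\tau \equiv \text{const}$ for $\bar x$ (the conformal factor is trivial), so $g$ and the induced metric $I$ of $\bar x$ agree up to scale; then \eqref{bcon} gives $\lambda_i = e^\tau b_i + H$, and since $B$ is parallel-ish here one shows $H$ is constant; constancy of the scalar curvature then follows from the Gauss equation \eqref{stru4}/\eqref{cond1}, because all the $A_{ij}, B_{ij}$ are then determined by constants.

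I would fill in the middle as follows: differentiate the candidate $P$, collect the coefficients of $Y, N, Y_i, \xi$ separately, and check each vanishes — the $Y_i$-coefficient vanishes precisely by $A_{ij} = \mu B_{ij} + \lambda\delta_{ij}$ and $C_i = 0$; the $\xi$- and $N$-coefficients vanish by $C = 0$ and the choice of the scalar multiplying $N$ in terms of $\lambda$ and $\mathrm{tr}(A) = \tfrac{1}{2n}(n^2\kappa - 1)$ from \eqref{cond1}. Then invoke the congruence theorem to move $P$ to one of the three canonical positions. Finally, read off from \eqref{coff1} that in the canonical position $\tau$ is constant, whence $H$ is constant (using $C_i = e^{-2\tau}(H\tau_i - H_i - \sum_j h_{ij}\tau_j) = 0$ reduces to $H_i = 0$) and $\kappa_M$ is constant (using \eqref{hh} and constancy of $|II|^2$, which in turn follows from $\sum B_{ij}^2 = \tfrac{n-1}{n}$ in \eqref{cond1} plus constancy of $\tau$ and $H$).

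The main obstacle I anticipate is pinning down the exact scalar coefficient of $N$ in the definition of $P$ and verifying that $\langle P, P\rangle_2$ is genuinely constant and that its sign sorts the three model cases correctly — this requires care with the signature conventions and with the relation $\mathrm{tr}(A) = \tfrac{1}{2n}(n^2\kappa - 1)$, since $\kappa$ (hence $\mathrm{tr}(A)$) is a priori a function and one must use the hypothesis $A = \mu B + \lambda g$ (taking traces, and using $\sum B_{ii} = 0$ from \eqref{cond1}) to see that $\mathrm{tr}(A) = n\lambda$ is in fact constant. Once $\mathrm{tr}(A)$ is known to be constant, the rest is bookkeeping with the structure equations.
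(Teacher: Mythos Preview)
Your strategy is exactly the paper's: find a constant vector in $\mathbb{R}^{n+3}_2$ from the frame, split on its causal type, normalize by $O(n+3,2)$, and read off constancy of $\tau$ and $H$. The one correction is the form of the constant vector: with $C=0$, the structure equations give $dY,\,dN,\,d\xi\in\mathrm{span}\{Y_i\}$, so there are no $Y$-, $N$-, or $\xi$-coefficients to kill, only the single $Y_i$-coefficient; substituting $A_{ij}=\mu B_{ij}+\lambda\delta_{ij}$ forces the combination $e=N-\lambda Y-\mu\xi$ (your ansatz $Y+\mu\xi+(\text{scalar})N$ has only one free parameter and cannot be made constant unless $\lambda=1$). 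With $e$ in hand one has $\langle e,e\rangle=\mu^2-2\lambda$ and $\langle Y,e\rangle=1$, $\langle\xi,e\rangle=-\mu$, and after normalizing $e$ to a standard light/space/timelike vector these last two relations give $e^{\bar\tau}=\text{const}$ and $\bar H=\mu$ directly, just as you outline.
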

\begin{proof}
 Since $C=0$, From (\ref{stru3}), we can take the
local orthonormal basis $\{E_1,\cdots,E_n\}$ such that
\begin{equation}\label{ab}
(B_{ij})=diag(b_1,\cdots,b_n),~~(A_{ij})=diag(a_1,\cdots,a_n).
\end{equation}
Since $A=\mu B+\lambda g$, from structure equations (\ref{struct})
we get that
\begin{equation*}\label{c1}
dN-\lambda dY-\mu d\xi=0
\end{equation*}
and
$$d(N-\lambda Y-\mu \xi)=0.$$ Therefore we can find a constant
vector $e\in \mathbb{R}^{n+3}_2$ such that
\begin{equation}\label{e1}
N-\lambda Y-\mu \xi=e.
\end{equation}
From (\ref{innp}) and (\ref{e1}) we get that
\begin{equation*}
<e,e>=\mu^2-2\lambda,~~~ <Y,e>=1.
\end{equation*}
To prove the Theorem we consider the following three cases,\\
{\bf Case 1}\;\; $e$ is lightlike, i.e., $\mu^2-2\lambda=0$,\\
{\bf Case 2}\;\; $e$ is spacelike, i.e., $\mu^2-2\lambda>0$,\\
{\bf Case 3}\;\; $e$ is timelike, i.e., $\mu^2-2\lambda<0$.

First we consider {\bf Case 1,} $e$ is lightlike, i.e.,
$\mu^2-2\lambda=0$. Then there exists a $T\in O(n+3,2)$ such that
\begin{equation*}
\bar{e}=(-1,\vec{0},1)=eT=(N-\lambda Y-\mu \xi)T.
\end{equation*}
Let $\bar{x}:M^n\rightarrow \mathbb{R}^{n+1}_1$ be a hypersurface which its
conformal position vector is $\bar{Y}=YT$, then
$\bar{N}=NT,\bar{\xi}=\xi T$, and
\begin{equation}\label{const}
\bar{e}=\bar{N}-\lambda \bar{Y}-\mu \bar{\xi}, ~~<\bar{Y},\bar{e}>=1,~~ <\bar{\xi},\bar{e}>=-\mu.
\end{equation}
Writing $$\bar{Y}=e^{\bar{\tau}}( \frac{<\bar{x},
\bar{x}>+1}{2},\bar{x}, \frac{<\bar{x},
\bar{x}>-1}{2}), ~~~\bar{\xi}=-\bar{H}( \frac{<\bar{x},
\bar{x}>+1}{2},\bar{x}, \frac{<\bar{x},
\bar{x}>-1}{2})+\bar{y}_{n+1},$$ then from (\ref{coff}) and (\ref{const}),
we obtain that
\begin{equation*}
e^{\bar{\tau}}=1,~~ \bar{H}=\mu.
\end{equation*}
Since $\bar{Y}=( \frac{<\bar{x}, \bar{x}>+1}{2},\bar{x},
\frac{<\bar{x}, \bar{x}>-1}{2})$, then
$g=<d\bar{x},d\bar{x}>=\bar{I}.$ From (\ref{cond1}) we have
$$tr(A)=n\lambda=\frac{1}{2n}(n^2\kappa-1).$$ Since
$\kappa_M=\kappa$, so the mean curvature and scalar curvature of
hypersurface $\bar{x}$ are constant.

Next we consider {\bf Case 2,} $e$ is spacelike, i.e.,
$\mu^2-2\lambda>0$. Then there exists a $T\in O(n+3,2)$ such that
\begin{equation*}
\bar{e}=(\vec{0},\sqrt{\mu^2-2\lambda})=eT=(N-\lambda Y-\mu \xi)T.
\end{equation*}
Let $\bar{x}:M^n\rightarrow \mathbb{H}^{n+1}_1(-1)$ be a hypersurface which
its conformal position vector is $\bar{Y}=YT$, then
$\bar{N}=NT,\bar{\xi}=\xi T$, and
\begin{equation}\label{const1}
\bar{e}=\bar{N}-\lambda \bar{Y}-\mu \bar{\xi},~~<\bar{Y},\bar{e}>=1,~~ <\bar{\xi},\bar{e}>=-\mu.
\end{equation}
Writing
$\bar{Y}=e^{\bar{\tau}}(\bar{x},1),~~\bar{\xi}=-\bar{H}(\bar{x},
1)+\bar{y}_{n+1}$, then from (\ref{coff1}) and (\ref{const1}),
we obtain that
\begin{equation*}
e^{\bar{\tau}}=\frac{1}{\sqrt{\mu^2-2\lambda}},~~ \bar{H}=\mu.
\end{equation*}
Since $<d\bar{x},d\bar{x}>=(\mu^2-2\lambda)g$, so
$\kappa_M=\frac{1}{\mu^2-2\lambda}\kappa.$
Therefore the mean curvature and scalar curvature of hypersurface
$\bar{x}$ are constant.

Finally  we consider {\bf Case 3,} $e$ is timelike, i.e.,
$\mu^2-2\lambda<0$. Then there exists a $T\in O(n+3,2)$ such that
\begin{equation*}
\bar{e}=(-\sqrt{2\lambda-\mu^2},\vec{0})=eT=(N-\lambda Y-\mu \xi)T.
\end{equation*}
Let $\bar{x}:M^n\rightarrow \mathbb{S}^{n+1}_1(1)$ be a hypersurface which
its conformal position vector is $\bar{Y}=YT$, then
$\bar{N}=NT,\bar{\xi}=\xi T$, and
\begin{equation}\label{const2}
\bar{e}=\bar{N}-\lambda \bar{Y}-\mu \bar{\xi},~~<\bar{Y},\bar{e}>=1,~~ <\bar{\xi},\bar{e}>=-\mu.
\end{equation}
Writing
$\bar{Y}=e^{\bar{\tau}}(1,\bar{x}),~~\bar{\xi}=-\bar{H}(1,\bar{x})+\bar{y}_{n+1}$,
then from (\ref{coff1}) and (\ref{const2}),
 we obtain that
\begin{equation*}
e^{\bar{\tau}}=\frac{1}{\sqrt{2\lambda-\mu^2}}, ~~\bar{H}=\mu.
\end{equation*}
Since $<d\bar{x},d\bar{x}>=(2\lambda-\mu^2)g$, so
$\kappa_M=\frac{1}{2\lambda-\mu^2}\kappa.$
Therefore the mean curvature and scalar curvature of hypersurface
$\bar{x}$ are constant.
\end{proof}
Now we prove Theorem \ref{th1}. Let $x: M^n\to M_1^{n+1}(c)$ be a Dupin spacelike hypersurface with two distinct principal curvatures.
We take a local orthonormal basis $\{E_1,\cdots,E_n\}$ with respect to  $g$ such that under the basis
$$(B_{ij})=diag(\underbrace{b_1,\cdots,b_1}_k,\underbrace{b_2,\cdots,b_2}_{n-k}).$$
Using the equation (\ref{cond1}), we have
$$b_1=\frac{1}{n}\sqrt{\frac{(n-1)(n-k)}{k}},~~~b_2=\frac{-1}{n}\sqrt{\frac{(n-1)k}{n-k}}.$$
From (\ref{dupin}), we can obtain that
\begin{equation}\label{cc}
C=0.
\end{equation}
From equation (\ref{stru3}), we know that $[A,B]=0$. Thus we can take a local orthonormal basis $\{E_1,\cdots,E_n\}$ with respect to  $g$ such that under the basis
\begin{equation}\label{abec}
(B_{ij})=diag(\underbrace{b_1,\cdots,b_1}_k,\underbrace{b_2,\cdots,b_2}_{n-k}),~~(A_{ij})=diag(a_1,a_2,\cdots,a_n)).
\end{equation}
Since $b_1,b_2$ are constant, using the covariant derivatives of $(B_{ij})$, (\ref{stru2}) and (\ref{cc}) we can obtain
\begin{equation*}
B_{ij,l}=0,~~~1\leq i,j,l\leq n,~~~\omega_{i\alpha}=0,~~1\leq i\leq k,~k+1\leq\alpha\leq n,
\end{equation*}
which implies that
$$R_{i\alpha i\alpha}=0,~~~1\leq i\leq k,~k+1\leq\alpha\leq n.$$
Combining the equation (\ref{stru4}), we have
$$-b_1b_2+a_i+a_{\alpha}=0,~~~1\leq i\leq k,~k+1\leq\alpha\leq n,$$
thus $$a_1=\cdots=a_k,~~a_{k+1}=\cdots=a_n.$$
Using the covariant derivatives of $(A_{ij})$,
$$\sum_lA_{ij,l}\omega_l=dA_{ij}+\sum_lA_{il}\omega_{lj}
+\sum_lA_{lj}\omega_{li},$$ we can get
\begin{equation}
A_{ij,\alpha}=0,~~A_{\alpha\beta,i}=0,~~1\leq i,j\leq k,~~k+1\leq\alpha,\beta\leq n.
\end{equation}
Since $E_{\alpha}(a_1)=A_{ii,\alpha}=0,~~E_i(a_n)=A_{\alpha\alpha,i}=0$, combining $b_1b_2+a_i+a_{\alpha}=0$ we know that
$a_1=\cdots=a_k,~~a_{k+1}=\cdots=a_n$ are constant. Thus
$$(A_{ij})=diag(\underbrace{a_1,\cdots,a_1}_k,\underbrace{a_2,\cdots,a_2}_{n-k}).$$
Let $\mu=\frac{a_1-a_2}{b_1-b_2}$ and $\lambda=tr(A)=ka_1+(n-k)a_2$, then
\begin{equation*}
A=\mu B+\lambda g.
\end{equation*}
From Lemma \ref{abc1}, up to a conformal transformation, we know that
$e^{\tau}$ is constant. Combining (\ref{coff}), we know that the principal curvatures of  $x$ are constant.
From the classification of spacelike isoparametric hypersurfaces (see \cite{li,ma2,x}), up to a conformal transformation of $M_1^{n+1}(c)$, the Dupin hypersurface
$x$ is an isoparametric in $M_1^{n+1}(c)$. we finish the proof of Theorem \ref{th1}.

\par\noindent
\section{The proof of Theorem \ref{th2}}
To prove Theorem \ref{th2}, we need the following lemmas.
\begin{lemma}\label{cartanle}
Let $T=\sum_{ij}T_{ij}\omega_i\otimes\omega_j$ be a symmetric $(0,2)$ tensor with $r\geq 2$ distinct eigenvalues on $\mathbb{R}^n$,
and $F=\sum_{ijk}F_{ijk}\omega_i\otimes\omega_j\otimes\omega_k$ a symmetric $(0,3)$ tensor. Let $\{e_1,e_2,\cdots,e_n\}$
be the orthonormal basis, consisting of unit eigenvector of $T$. Under the basis, let
$$(T_{ij})=diag(\rho_1,\cdots,\rho_1,\rho_2,\cdots,\rho_2,\cdots,\rho_r,\cdots,\rho_r).$$
Then there does not exist the symmetric $(0,2)$ tensor $T$ satisfying $r\geq 3$ and
\begin{equation}\label{condt}
c-\rho_i\rho_j=\sum_k\frac{(F_{ijk})^2}{(\rho_i-\rho_k)(\rho_j-\rho_k)}, ~~\rho_i\neq\rho_j.
\end{equation}
\end{lemma}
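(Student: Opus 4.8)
The plan is to argue by contradiction, using only the sign structure of the right‐hand side of (\ref{condt}). Suppose that some symmetric $(0,2)$-tensor $T$ with $r\ge 3$ distinct eigenvalues, together with a symmetric $(0,3)$-tensor $F$, satisfies (\ref{condt}); here the sum over $k$ is of course understood to range only over indices with $\rho_k\ne\rho_i$ and $\rho_k\ne\rho_j$, since otherwise the right‐hand side is undefined. Because each summand $\frac{(F_{ijk})^2}{(\rho_i-\rho_k)(\rho_j-\rho_k)}$ has non‐negative numerator, its sign is governed entirely by the factor $(\rho_i-\rho_k)(\rho_j-\rho_k)$: it is $\ge 0$ when $\rho_k$ lies outside the interval with endpoints $\rho_i,\rho_j$, and $\le 0$ when $\rho_k$ lies strictly between $\rho_i$ and $\rho_j$.

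Relabel the $r$ distinct eigenvalues so that $\rho_{(1)}>\rho_{(2)}>\cdots>\rho_{(r)}$, and then evaluate (\ref{condt}) on three pairs of eigenspaces. Taking $i$ in the $\rho_{(1)}$-eigenspace and $j$ in the $\rho_{(2)}$-eigenspace, every admissible $\rho_k$ satisfies $\rho_k<\rho_{(2)}<\rho_{(1)}$, so each summand is $\ge 0$ and hence $c\ge\rho_{(1)}\rho_{(2)}$. Taking $i$ in the $\rho_{(r-1)}$-eigenspace and $j$ in the $\rho_{(r)}$-eigenspace, every admissible $\rho_k$ satisfies $\rho_k>\rho_{(r-1)}>\rho_{(r)}$, so again each summand is $\ge 0$ and $c\ge\rho_{(r-1)}\rho_{(r)}$. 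Taking $i$ in the $\rho_{(1)}$-eigenspace and $j$ in the $\rho_{(r)}$-eigenspace, every admissible $\rho_k$ satisfies $\rho_{(r)}<\rho_k<\rho_{(1)}$, so each summand is $\le 0$ and $c\le\rho_{(1)}\rho_{(r)}$. These three inequalities hold even if some or all of the relevant coefficients $F_{ijk}$ vanish, in which case the corresponding bound is simply attained.

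To close the argument, combine the three relations $\rho_{(1)}\rho_{(2)}\le c$, $\rho_{(r-1)}\rho_{(r)}\le c$ and $c\le\rho_{(1)}\rho_{(r)}$. Since $r\ge 3$ we have $\rho_{(2)}>\rho_{(r)}$, so $\rho_{(1)}\rho_{(2)}\le\rho_{(1)}\rho_{(r)}$ gives $\rho_{(1)}(\rho_{(2)}-\rho_{(r)})\le 0$ and hence $\rho_{(1)}\le 0$; therefore $\rho_{(r)}<\rho_{(1)}\le 0$, so $\rho_{(r)}<0$. Dividing $\rho_{(r-1)}\rho_{(r)}\le\rho_{(1)}\rho_{(r)}$ by the negative number $\rho_{(r)}$ reverses the inequality to $\rho_{(r-1)}\ge\rho_{(1)}$, which contradicts $\rho_{(r-1)}<\rho_{(1)}$ (valid because $r-1\ge 2$ when $r\ge 3$). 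This contradiction shows that no symmetric $(0,2)$-tensor with $r\ge 3$ distinct eigenvalues can satisfy (\ref{condt}), proving the lemma.

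The argument is short and elementary once the sign trick is spotted, so I do not anticipate a genuine obstacle. The only points requiring care are fixing the convention that the summation in (\ref{condt}) omits the indices $k$ with $\rho_k\in\{\rho_i,\rho_j\}$, and checking that the three chosen pairs of eigenspaces genuinely produce inequalities of the asserted sign even in the degenerate situations where some of the $F_{ijk}$ are zero.
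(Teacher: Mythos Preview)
Your proof is correct and in fact more elementary than the paper's. The paper first observes, as you do, that for adjacent eigenvalues $\rho_{(m)},\rho_{(m+1)}$ the right-hand side of (\ref{condt}) is nonnegative, yielding $c\ge\rho_{(m)}\rho_{(m+1)}$. But instead of exploiting the extremal pair $(\rho_{(1)},\rho_{(r)})$ to obtain the reverse inequality $c\le\rho_{(1)}\rho_{(r)}$ as you do, the paper invokes a Cartan-type identity: using the full symmetry of $F$, the quantity $\frac{(F_{ijk})^2}{(\rho_i-\rho_k)(\rho_j-\rho_k)(\rho_i-\rho_j)}$ is antisymmetric in $j,k$, so for each fixed $i$ one has $\sum_{j:\rho_j\neq\rho_i}\frac{c-\rho_i\rho_j}{\rho_i-\rho_j}=0$. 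The paper then argues in two cases ($\rho_{(r)}<0$ or $\rho_{(r)}\ge0$ in your ordering) that all terms in this sum for $i$ extremal have the same sign and at least one is nonzero, contradicting the identity.

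Your argument bypasses the Cartan identity entirely: three well-chosen instances of (\ref{condt}) already force the incompatible chain $\rho_{(1)}\rho_{(2)}\le c\le\rho_{(1)}\rho_{(r)}$ and $\rho_{(r-1)}\rho_{(r)}\le c\le\rho_{(1)}\rho_{(r)}$, and pure algebra on the ordered reals finishes it. This is shorter and uses less structure (you never sum over $j$, and you only use the nonnegativity of $(F_{ijk})^2$, not the symmetry of $F$ in all three slots). The paper's route, on the other hand, is the classical Cartan method and generalises to settings where the extremal-pair inequality is unavailable.
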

\begin{proof}
We assume that there exists the symmetric $(0,2)$ tensor $T$ satisfying $r\geq 3$ and equation (\ref{condt}), we will find
a contradiction to prove the lemma.

We can assume that $\rho_1<\rho_2<\cdots<\rho_r$.  The equation (\ref{condt}) implies that
\begin{equation}\label{cartan0}
c-\rho_1\rho_2\geq0,~~c-\rho_2\rho_3\geq0,~~\cdots,~~c-\rho_k\rho_{k+1}\geq0,~~ c-\rho_{r-1}\rho_r\geq 0.
\end{equation}

For fixed induce $i$, the matrix $$\mathfrak{F}_{jk}:=\frac{(F_{ijk})^2}{(\rho_i-\rho_k)(\rho_j-\rho_k)(\rho_i-\rho_j)}$$
is antisymmetric for indices $j,k$, thus
\begin{equation}\label{cartan}
\sum_{j,\rho_j\neq\rho_i}\frac{c-\rho_i\rho_j}{\rho_i-\rho_j}=\sum_{j,k, \rho_j\neq\rho_i}\frac{(F_{ijk})^2}{(\rho_i-\rho_k)(\rho_j-\rho_k)(\rho_i-\rho_j)}=0.
\end{equation}

The proof of the lemma is divided into two cases: (1), $\rho_1<0$, (2), $\rho_1\geq0$.

For case (1), $\rho_1<0$. we have
$\rho_1\rho_2>\rho_1\rho_3>\cdots>\rho_1\rho_r.$ Combining (\ref{cartan0}), we have
$$c-\rho_1\rho_2\geq0,~~ c-\rho_1\rho_3>0,~\cdots,c-\rho_1\rho_r>0.$$ Thus
$$\frac{c-\rho_1\rho_j}{\rho_1-\rho_j}\leq0,~~\rho_j\neq \rho_1,$$
which is a contradiction with the equation (\ref{cartan}) for $i=1$.

For case (2), $\rho_1\geq0$. Then $\rho_r>\rho_{r-1}>\cdots>\rho_1\geq 0.$  Combining (\ref{cartan0}) we have
$c\geq\rho_r\rho_{r-1}>\rho_r\rho_{r-2}>\cdots>\rho_r\rho_1$, that is
$$c-\rho_r\rho_{r-1}\geq0, ~~c-\rho_r\rho_{r-1}>0,\cdots,c-\rho_r\rho_1>0.$$ Thus
$$\frac{c-\rho_r\rho_j}{\rho_r-\rho_j}\geq0,~~\rho_j\neq \rho_r,$$
which is a contradiction with the equation (\ref{cartan}) for $i=r$.
Thus we finish the proof of the lemma.
\end{proof}
Now let $M^n$ be a spacelike  Dupin hypersurface in $M_1^{n+1}(c)$ with $r (\geq 3)$ distinct principal curvatures. If the M\"{o}bius curvatures
are constant, then $C=0$, which implies $[A,B]=0$. Therefore we can choose a local orthonormal basis $\{E_1,\cdots,E_n\}$
with respect to the conformal metric $g$  such that
\begin{equation}\label{coe}
\begin{split}
&\left(A_{ij}\right)=diag(a_1,\cdots,a_n),\\
&\left(B_{ij}\right)=diag(b_1,\cdots,b_n)=diag(b_{\bar{1}},\cdots,b_{\bar{1}},b_{\bar{2}},\cdots,b_{\bar{2}},\cdots,b_{\bar{r}},\cdots,b_{\bar{r}}).
\end{split}
\end{equation}
For some $i$ fixed, in this section we define the index set $$[i]:=\{m|b_m=b_i\}.$$
Since the conformal principal curvatures $\{b_1,b_2,\cdots,b_n\}$ are constant, under the basis $\{E_1,\cdots,E_n\}$,  using the
covariant derivative of $B$, we have
\begin{equation}\label{codac}
(b_i-b_j)\omega_{ij}=\sum_kB_{ij,k}\omega_k.
\end{equation}
 We have the following results,
\begin{equation}\label{tensor1}
\begin{split}
&B_{ij,k}=0,~~when~~[i]=[j],~or~[j]=[k],~or~[i]=[k],\\
&\omega_{ij}=\sum_k\frac{B_{ij,k}}{b_i-b_j}\omega_k=\sum_{k\notin [i],[j]}\frac{B_{ij,k}}{b_i-b_j}\omega_k,~~when~~[i]\neq[j].
\end{split}
\end{equation}
Hence
\begin{equation}\label{w2}
\left\{\begin{split}
&B_{ij,k}=0 \text{ when }~[i]=[j] ~~or~~[i]=[k] \\
&\omega_{ij}=\sum_k\frac{B_{ij,k}}{b_i-b_j}\omega_k \text{ when }~~[i]\neq [j]
\end{split}\right.
\end{equation}
and
\begin{equation}\label{cur1}
R_{ijij}=\sum_{k\notin [i],[j]}\frac{2B^2_{ij,k}}{(b_i-b_k)(b_j-b_k)} \text{ when }~~[i]\neq [j].
\end{equation}
\begin{lemma}\label{blas}
Let $x:M^n\to M_1^{n+1}(c)$ be a spacelike  Dupin hypersurface with $r\geq 3$ distinct principal curvatures. If the  M\"{o}bius conformal curvatures are constant, then the eigenvalues of the conformal tensor
$\{a_1,\cdots,a_n\}$ are constant.
\end{lemma}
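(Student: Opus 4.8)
The plan is to differentiate the structure equations in the direction of the principal distributions and combine this with Lemma 5.2. First I would observe that since $C=0$, equation \eqref{stru1} reduces to $A_{ij,k}=A_{ik,j}$, so $\sum_k A_{ij,k}\omega_i\otimes\omega_j\otimes\omega_k$ is a totally symmetric $(0,3)$ tensor. Next, from equation \eqref{cond1} we have $R_{ij}=\mathrm{tr}(A)\delta_{ij}+(n-2)A_{ij}+\sum_k B_{ik}B_{kj}$; since $\mathrm{tr}(A)=\tfrac1{2n}(n^2\kappa-1)$ and, by Theorem \ref{is1}, the conformal principal curvatures are constant, each distinct eigenvalue $a_i$ is, up to the additive constant coming from $b_i^2$, a multiple of $\kappa$. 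So it suffices to show $\kappa$ is constant, i.e. that $\mathrm{tr}(A)$ is constant, which will follow once we show each $a_i$ is constant on a curvature surface and constant across them.

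The key step is to compute $R_{ijij}$ in two ways. On one hand, the conformal Gauss equation \eqref{stru4} gives, for $[i]\ne[j]$,
\begin{equation*}
R_{ijij}=-b_ib_j+a_i+a_j.
\end{equation*}
On the other hand, equation \eqref{cur1} gives
\begin{equation*}
R_{ijij}=\sum_{k\notin[i],[j]}\frac{2B_{ij,k}^2}{(b_i-b_k)(b_j-b_k)}.
\end{equation*}
Equating these, I get $a_i+a_j-b_ib_j=\sum_{k}\tfrac{2B_{ij,k}^2}{(b_i-b_k)(b_j-b_k)}$ for every pair with $[i]\ne[j]$. If the $a_i$ were all constant this is exactly the obstruction ruled out by Lemma \ref{cartanle} (with $T$ playing the role of $B$, $c$ a constant, and $F$ a multiple of the covariant derivative of $B$) — but here we must instead use it as a relation to propagate constancy. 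The idea is: first show $a_i$ is constant along its own curvature distribution, i.e. $E_m(a_i)=A_{ii,m}=0$ for $m\in[i]$. This follows from \eqref{w2}: since $B_{ij,k}=0$ whenever two of the indices lie in the same $[\cdot]$-class, and $\omega_{ij}$ (for $[i]=[j]$) has no component that can feed a derivative of $a_i$ in a direction inside $[i]$, differentiating $A_{ij}=a_i\delta_{ij}$ using the definition of $A_{ij,k}$ and the vanishing of the relevant connection forms forces $A_{ii,m}=0$ for $m\in[i]$. Then, for a direction $E_m$ with $m\notin[i],[j]$, differentiate the relation $a_i+a_j=b_ib_j+\sum_k\tfrac{2B_{ij,k}^2}{(b_i-b_k)(b_j-b_k)}$; the right side's derivative is controlled by $B_{ij,k,m}$, and using the symmetry $A_{ij,k}=A_{ik,j}$ together with a Ricci-type identity one shows $E_m(a_i+a_j)$ is expressible through terms that, summed appropriately over the three distinct classes $[i],[j],[m]$, cancel — yielding $E_m(a_i)=0$ there too. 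Combining, every $a_i$ is annihilated by all of $E_1,\dots,E_n$, hence constant.

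The main obstacle I expect is the bookkeeping in the second part: showing $E_m(a_i)=0$ for $m$ outside $[i]\cup[j]$ requires differentiating the Codazzi-type relation \eqref{cur1}, which brings in second covariant derivatives $B_{ij,k,l}$, and one must use the Ricci identity $B_{ij,k,l}-B_{ij,l,k}=\sum_m(B_{mj}R_{mikl}+B_{im}R_{mjkl})$ together with \eqref{stru4} and the constancy of the $b$'s to close the system. An alternative, cleaner route that sidesteps some of this: use $\mathrm{tr}(A)=\tfrac1{2n}(n^2\kappa-1)$ and the second Bianchi identity for $R_{ijkl}$ in \eqref{stru4} to get a first-order PDE for $\kappa$ whose only solution, given the Dupin condition and constancy of $B$, is $\kappa=\mathrm{const}$; then \eqref{cond1} immediately gives each $a_i$ constant. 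I would try the Bianchi-identity approach first and fall back on the direct computation if the contracted Bianchi identity does not decouple cleanly.
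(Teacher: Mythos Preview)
Your plan has two genuine gaps and, more importantly, has the two halves of the argument in the wrong order.

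First, the reduction ``it suffices to show $\kappa$ is constant'' is incorrect. From \eqref{cond1} you get $(n-2)a_i=R_{ii}-b_i^2-\mathrm{tr}(A)$, so constancy of $\mathrm{tr}(A)$ (equivalently of $\kappa$) still leaves you needing each Ricci eigenvalue $R_{ii}$ constant; but $R_{ii}=b_i^2+(n-2)a_i+\mathrm{tr}(A)$ is circular, and summing only recovers the scalar identity $n(n-1)\kappa=\tfrac{n-1}{n}+(2n-2)\,\mathrm{tr}(A)$, which carries no information about the individual $a_i$.

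Second, and more seriously, your ``easy'' direction is actually the hard one, and your justification for it is invalid. For $m\in[i]$ the formula $A_{ii,m}=E_m(a_i)$ contains no connection form at all (since $A$ is diagonal and $\omega_{ii}=0$), so ``vanishing of the relevant connection forms'' cannot force $E_m(a_i)=0$. What \emph{is} easy is the transverse direction: from $(a_i-a_j)\omega_{ij}=\sum_kA_{ij,k}\omega_k$ and \eqref{w2} one gets, for $[i]\neq[j]$,
\[
A_{ij,k}=\frac{a_i-a_j}{b_i-b_j}\,B_{ij,k},
\]
whence $E_m(a_j)=A_{jj,m}=A_{mj,j}=\frac{a_m-a_j}{b_m-b_j}B_{mj,j}=0$ whenever $m\notin[j]$. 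This is the step you should do first, and it requires no second derivatives or Bianchi identities.

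The direction $m\in[j]$ is then handled \emph{algebraically}, not by differentiating \eqref{cur1}. If some $B_{jk,l}\neq0$ (necessarily with $[j],[k],[l]$ pairwise distinct), the displayed relation and the symmetry $A_{jk,l}=A_{lk,j}$ give $\tfrac{a_j-a_k}{b_j-b_k}=\tfrac{a_l-a_k}{b_l-b_k}$, so $a_j$ is an affine combination (with constant coefficients, since the $b$'s are constant) of $a_k,a_l$ with $k,l\notin[j]$; applying $E_m$ with $m\in[j]$ and using the transverse vanishing just proved gives $E_m(a_j)=0$. If instead $B_{jk,l}=0$ for all $k,l$ in a neighborhood, then \eqref{cur1} gives $R_{jkjk}=0$, so \eqref{stru4} yields $a_j=b_jb_k-a_k$ for $k\notin[j]$, and the same conclusion follows; a continuity argument covers boundary points. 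Your proposed route via $B_{ij,k,l}$ and Ricci identities is unnecessary and would be much harder to close.
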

\begin{proof}
Since $A_{ij,k}=A_{ik,j}$, using the covariant derivative of $A$, we have
\begin{equation*}
(a_i-a_j)\omega_{ij}=\sum_kA_{ij,k}\omega_k,
\end{equation*}
which implies, from (\ref{w2}),
\begin{equation}\label{aw1}
(a_i-a_j)\frac{B_{ij,k}}{b_i-b_j}=A_{ij,k} \text{ when }~~[i]\neq [j].
\end{equation}
Hence we know
\begin{equation}\label{aw2}
E_i(a_j)=A_{jj,i}=A_{ij,j}=0 \text{ when }~~ [i]\neq [j]
\end{equation}
from $B_{ij,j}=0$. Now to verify that $a_j$ is a constant, we only need to prove
\begin{equation}\label{abw}
E_i(a_j)=0,~~i\in [j].
\end{equation}
For a fixed point $p\in M^n$ and $j\in\{1, \cdots, n\}$, it is either $B_{jk,l} = 0$ for all $1\leq k,l\leq n$ or $B_{jk,l}\neq 0$ for some $1\leq k,l\leq n$.
First assume it is the second case. In fact we may assume $B_{jk,l}\neq 0$ in a neighborhood of $p$ for some $j,k,l$ that have to be associated to
three distinct conformal principal curvatures. Therefore, from \eqref{aw1}, we obtain
\begin{equation*}
\frac{a_{j}-a_k}{b_{j}-b_k}=\frac{A_{jk,l}}{B_{jk,l}}=\frac{A_{lk,j}}{B_{lk,j}}=\frac{a_l-a_k}{b_l-b_k},
\end{equation*}
which implies
\begin{equation}\label{aw3}
a_{j}=(a_l-a_k)\frac{b_{j}-b_k}{b_l-b_k}+a_k.
\end{equation}
This easily implies (\ref{abw}). Next, suppose it is the first case. If there is a sequence of point $p_i\to p$ in $M^n$ such that the second cases happen on $p_i$ for
some $1\leq k,l\leq n$, then \eqref{abw} holds at $p$ due to the continuity. Otherwise, there is an open neighborhood $U\subset M^n$ of $p$ such that
$B_{jk,l} = 0$ for all $1\leq k,l\leq n$
in $U$. Therefore $R_{jkjk} = 0$ in $U$ from \eqref{cur1}. Hence, from \eqref{stru4}, we
derive
$$
a_j = b_jb_k - a_k \text{  in $U$ when $k\notin[j]$},
$$
which obviously implies \eqref{abw}. Thus the proof is complete.
\end{proof}
Since the eigenvalues of $A$ are constant, immediately we know
\begin{equation}\label{cotensor}
\begin{split}
&A_{ij,k}=0,~~when~~a_i=a_j,~or~a_j=a_k,\\
&\frac{a_i-a_j}{b_i-b_j}B_{ij,k}=A_{ij,k},~~when~~[i]\neq [j].
\end{split}
\end{equation}
Particularly the third equation in \eqref{cotensor} and $A_{ij,k}=A_{ik,j}$ implies
\begin{equation}\label{extra-vanish}
A_{ij,k} = 0 \text{ for $[j] = [i]$ and $k\notin [j]$},
\end{equation}
We define
$$V_{b_i} = Span\{E_m: m\in [i]\} \text{ or } V_{b_{\bar k}} = Span\{E_m: m\in [\bar k]\}.
$$
We can change the order of the subbasis in the eigenspace $V_{b_{\bar{k}}}$ such that
\begin{equation}\label{coe1}
\left(A_{ij}\right)|_{i,j\in [\bar{k}]}=diag(\underbrace{a_{k_1},\cdots,a_{k_1}},\underbrace{a_{k_2},\cdots,a_{k_2}}\cdots,\underbrace{a_{k_m},\cdots,a_{k_m}})
\end{equation}
for $a_{k_1}<a_{k_2}<\cdots<a_{k_m}.$
We then define the index sets
$$
(i):=\{l\in [i] | \quad a_l=a_i\} \text{ and }
(\bar{k_ i}) := \{l\in [\bar k]| \quad a_l = a_{k_i}\}.
$$
From \eqref{extra-vanish}, we have the following lemma,
\begin{lemma}\label{eigenspace}
Let $x:M^n\to M_1^{n+1}(c)$ be a spacelike  Dupin hypersurface with $r\geq 3$ distinct principal curvatures. If the M\"{o}bius curvatures are constant,
 Then, under the basis taken in (\ref{coe}) and (\ref{coe1}), for some $[\bar{k}]$ fixed,
$(i),(j)\in [\bar{k}]$ and $(i)\neq (j)$,
\begin{equation}\label{aa1}
(a_i-a_j)\omega_{ij}=\sum_{l\in[\bar{k}]}A_{ij,l}\omega_l
\end{equation}
and
\begin{equation}\label{aa2}
R_{ijij}=\sum_{l\in[\bar{k}],l\notin (i),(j)}\frac{2A_{ij,l}^2}{(a_i-a_l)(a_j-a_l)}.
\end{equation}
More importantly we have the generalized Cartan identity for $i\in[\bar{k}]$
\begin{equation}\label{aa4}
\sum_{j\in[\bar{k}],j\notin(i)}\frac{R_{ijij}}{a_i-a_j}=\sum_{j,l\in[\bar{k}],j,l\notin (i)}\frac{A_{ij,l}^2}{(a_i-a_l)(a_j-a_l)(a_i-a_j)}=0.
\end{equation}
\end{lemma}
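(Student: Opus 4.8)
The plan is to mimic the structure already established for the conformal second fundamental form $B$ in \eqref{tensor1}--\eqref{cur1} and for the conformal $2$-tensor $A$ in \eqref{cotensor}--\eqref{extra-vanish}, but now working \emph{inside} a single eigenspace $V_{b_{\bar k}}$ of $B$. First I would fix $[\bar k]$ and restrict attention to indices $i,j,l\in[\bar k]$; since on this block $B$ is a multiple of the identity, the only obstruction to treating $A|_{[\bar k]}$ exactly as $B$ was treated on all of $M^n$ is controlling the off-block connection forms $\omega_{il}$ with $l\notin[\bar k]$. The covariant derivative identity $(a_i-a_j)\omega_{ij}=\sum_k A_{ij,k}\omega_k$ combined with $A_{ij,k}=A_{ik,j}$ and the vanishing results \eqref{cotensor}, \eqref{extra-vanish} should show that for $(i)\neq(j)$ inside $[\bar k]$ the sum on the right collapses to $k\in[\bar k]$ (the terms with $k\notin[\bar k]$ vanish by \eqref{extra-vanish} applied with the roles of the indices permuted via symmetry of $A_{ij,k}$), which is exactly \eqref{aa1}.

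Next, to get \eqref{aa2} I would compute the curvature $R_{ijij}$ for $(i)\neq(j)$, $i,j\in[\bar k]$, using the structure equation $d\omega_{ij}=-\sum_l\omega_{il}\wedge\omega_{lj}+\tfrac12\sum_{kl}R_{ijkl}\omega_k\wedge\omega_l$ together with \eqref{aa1}. Differentiating \eqref{aa1} and substituting, the cross terms $\omega_{il}\wedge\omega_{lj}$ with $l\notin[\bar k]$ should drop out because those $\omega_{il}$ involve only $A_{il,m}$ with $m\notin[\bar k],[\bar i]$ by \eqref{w2}/\eqref{tensor1} and \eqref{extra-vanish}, and a parity/antisymmetry cancellation analogous to the one producing \eqref{cur1} leaves only $l\in[\bar k]$, $l\notin(i),(j)$, giving the stated formula. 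Then \eqref{aa4} follows from \eqref{aa2} by the standard Cartan-type argument: for fixed $i$, the quantity $\dfrac{A_{ij,l}^2}{(a_i-a_l)(a_j-a_l)(a_i-a_j)}$ is antisymmetric under interchanging $j$ and $l$ (both ranging over $[\bar k]\setminus(i)$), so summing over both indices yields zero, exactly as in the derivation of \eqref{cartan} in Lemma \ref{cartanle}.

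The main obstacle I anticipate is \emph{bookkeeping of the index ranges} rather than any deep idea: one must carefully verify that every connection form $\omega_{il}$ appearing when one differentiates \eqref{aa1} either lies ``inside'' $[\bar k]$ or contributes a term that cancels, and this requires simultaneously invoking \eqref{w2} (for indices split by $B$), \eqref{cotensor} and especially \eqref{extra-vanish} (for indices split by $A$ within a $B$-eigenspace), together with the full symmetry $A_{ij,k}=A_{ji,k}=A_{ik,j}$. In particular one has to handle the three sub-cases according to how the third index $l$ sits relative to $[\bar k]$ and relative to $(i),(j)$, checking that each spurious term is killed by one of the vanishing lemmas. Once that combinatorial reduction is clean, \eqref{aa1}--\eqref{aa2} are immediate and \eqref{aa4} is a one-line antisymmetry argument, so I would present the proof as: (i) reduce the covariant-derivative identity using \eqref{cotensor}/\eqref{extra-vanish} to obtain \eqref{aa1}; (ii) feed \eqref{aa1} into the second structure equation, discard the off-block terms, and read off \eqref{aa2}; (iii) apply the antisymmetry of the summand to conclude \eqref{aa4}.
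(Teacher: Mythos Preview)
Your proposal is correct and matches the paper's approach. In fact the paper gives no separate proof of this lemma at all: it simply states ``From \eqref{extra-vanish}, we have the following lemma'' and then records \eqref{aa1}--\eqref{aa4}, so the argument is meant to be read off directly from the preceding machinery. Your sketch fills in exactly the details the paper suppresses: \eqref{extra-vanish} kills the off-block terms $A_{ij,l}$ with $l\notin[\bar k]$, giving \eqref{aa1}; for \eqref{aa2} the cross terms $\omega_{il}\wedge\omega_{lj}$ with $l\notin[\bar k]$ contribute nothing to the $\omega_i\wedge\omega_j$ coefficient because $B_{il,j}=B_{il,i}=0$ by \eqref{tensor1} (since $i,j\in[\bar k]$), so the computation reduces to the $[\bar k]$-block and parallels \eqref{cur1} with $(A,a,(\cdot))$ in place of $(B,b,[\cdot])$; and \eqref{aa4} is the antisymmetry trick of Lemma~\ref{cartanle}. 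The only place to be careful is exactly the bookkeeping you flag, and the vanishing facts \eqref{tensor1}, \eqref{cotensor}, \eqref{extra-vanish} handle each sub-case.
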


\begin{lemma}\label{ble2} Let $x:M^n\to M_1^{n+1}(c)$ be a spacelike  Dupin hypersurface with $r\geq 3$ distinct principal curvatures. If  the M\"{o}bius  curvatures are constant, then $A|_{V_{b_{\bar{k}}}}$ has two distinct eigenvalues at most. Moreover
$$b_{\bar{k}}^2+a_{\bar{k}}+\bar{a}_{\bar{k}}=0$$
when $A|_{V_{b_{\bar{k}}}}$ has two distinct eigenvalues $a_{\bar{k}}$ and $\bar{a}_{\bar{k}}$.
\end{lemma}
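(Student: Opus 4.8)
The plan is to restrict attention to a single conformal‑principal‑curvature eigenspace $V_{b_{\bar k}}$ and analyze the tensor $A|_{V_{b_{\bar k}}}$ using the restricted Cartan identity \eqref{aa4} from Lemma \ref{eigenspace}, together with the Gauss‑type equation \eqref{stru4}. First I would fix $\bar k$ and compute $R_{ijij}$ for $(i)\neq(j)$ with $i,j\in[\bar k]$: from \eqref{stru4} we have $R_{ijij}=B_{il}B_{jk}-\dots = -b_{\bar k}^2 + a_i + a_j$ (since $B$ is a constant multiple of the identity on $V_{b_{\bar k}}$, the off‑diagonal $B$ terms vanish and $B_{ii}B_{jj}=b_{\bar k}^2$, with the sign governed by \eqref{stru4}). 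Plugging this into \eqref{aa2} gives the identity
\begin{equation}
-b_{\bar k}^2 + a_i + a_j = \sum_{l\in[\bar k],\, l\notin(i),(j)} \frac{2A_{ij,l}^2}{(a_i-a_l)(a_j-a_l)}, \qquad (i)\neq(j).
\end{equation}
Setting $c := b_{\bar k}^2$, $\rho$‑values equal to the distinct eigenvalues $a_{k_1}<\dots<a_{k_m}$ of $A|_{V_{b_{\bar k}}}$, and $F_{ijl}$ proportional to $A_{ij,l}$ (up to the factor $2$, which can be absorbed), this is precisely equation \eqref{condt} of Lemma \ref{cartanle} after the obvious rescaling.

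Given that, the argument is short. If $A|_{V_{b_{\bar k}}}$ had $r\ge 3$ distinct eigenvalues, Lemma \ref{cartanle} would forbid the existence of such a tensor — a contradiction. Hence $A|_{V_{b_{\bar k}}}$ has at most two distinct eigenvalues. In the two‑eigenvalue case, I would take $i\in(\bar{k_1})$ and $j\in(\bar{k_2})$ (so $a_i=a_{\bar k}$, $a_j=\bar a_{\bar k}$), and observe that in \eqref{aa2} the sum on the right is empty: every $l\in[\bar k]$ lies in $(i)$ or $(j)$, so there is no index $l\notin(i),(j)$. Therefore $R_{ijij}=0$, and combining with $R_{ijij}=-b_{\bar k}^2+a_i+a_j$ yields exactly $b_{\bar k}^2 + a_{\bar k}+\bar a_{\bar k}=0$.

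The main obstacle is bookkeeping with the signs and the precise form of the Gauss equation: one must check carefully that, with $B|_{V_{b_{\bar k}}} = b_{\bar k}\,\mathrm{Id}$, the curvature term from \eqref{stru4} is $R_{ijij} = B_{ii}B_{jj} - B_{ij}^2 + A_{ii}\delta_{jj} + A_{jj}\delta_{ii} - \dots$, which collapses to $b_{\bar k}^2 - 0 + a_i + a_j - 0 - 0 - 0$; one must also track whether the indexing convention gives $R_{ijij}$ or its negative, so that the sign of $b_{\bar k}^2$ in the final relation is correct — this is why the statement reads $b_{\bar k}^2 + a_{\bar k} + \bar a_{\bar k} = 0$ rather than with a minus sign. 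A secondary point is to confirm that the hypotheses of Lemma \ref{cartanle} are genuinely met: $F_{ijl} := A_{ij,l}$ is symmetric in all three indices (it is symmetric in $i,j$ since $A_{ij}$ is symmetric, and $A_{ij,l}=A_{il,j}$ by \eqref{stru1} together with $C=0$), and $\eqref{extra-vanish}$ guarantees that $A_{ij,l}$ with $l\notin[\bar k]$ does not interfere, so the identity really is internal to the block indexed by $[\bar k]$. Once these checks are in place the proof is immediate.
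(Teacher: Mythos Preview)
Your derivation of the key identity
\[
-b_{\bar k}^{2}+a_i+a_j \;=\; \sum_{l\in[\bar k],\,l\notin(i),(j)}\frac{2A_{ij,l}^{2}}{(a_i-a_l)(a_j-a_l)},\qquad (i)\neq(j),
\]
is correct, and your treatment of the two--eigenvalue case (the sum is empty, so $R_{ijij}=0$) is fine. The gap is in the step where you claim this identity ``is precisely equation \eqref{condt} of Lemma~\ref{cartanle} after the obvious rescaling.'' It is not: \eqref{condt} reads $c-\rho_i\rho_j=\sum_k F_{ijk}^{2}/((\rho_i-\rho_k)(\rho_j-\rho_k))$, with a \emph{product} $\rho_i\rho_j$ on the left. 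Your left side has a \emph{sum} $a_i+a_j$. No affine change $\rho_i=\alpha a_i+\beta$ (which is all that preserves the differences $a_i-a_l$ on the right) can convert $a_i+a_j$ into $\rho_i\rho_j$ up to a constant, because the product introduces a genuine $a_ia_j$ term. So Lemma~\ref{cartanle} does not apply here, and the sign analysis in its proof (the cases $\rho_1<0$ versus $\rho_1\geq 0$) has no analogue for the additive form.

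The paper's argument exploits exactly the additive structure you have. From the identity above, for the two \emph{smallest} eigenvalues $a_{k_1}<a_{k_2}$ every denominator $(a_{k_1}-a_l)(a_{k_2}-a_l)$ with $l\notin(\bar{k_1}),(\bar{k_2})$ is positive, so $-b_{\bar k}^{2}+a_{k_1}+a_{k_2}\geq 0$. Since $a_i+a_j\geq a_{k_1}+a_{k_2}$ whenever $(i)\neq(j)$, one gets $R_{ijij}=-b_{\bar k}^{2}+a_i+a_j\geq 0$ for \emph{all} such pairs. Now feed this into the generalized Cartan identity \eqref{aa4} with $i\in(\bar{k_1})$: every term has $R_{ijij}\geq 0$ and $a_{k_1}-a_j<0$, so each summand is $\leq 0$ and the total is $0$, forcing $R_{ijij}=0$ for all $j\notin(\bar{k_1})$. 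This kills the $j\in(\bar{k_1})$ terms in \eqref{aa4} for $i\in(\bar{k_2})$, and one iterates to get $R_{ijij}=0$ for all $(i)\neq(j)$, i.e.\ $a_i+a_j=b_{\bar k}^{2}$ for every distinct pair; that forces $m\leq 2$. You have all the ingredients (\eqref{aa2}, \eqref{aa4}, \eqref{stru4}) --- you just need to run this monotonicity/Cartan argument rather than invoke Lemma~\ref{cartanle}.
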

\begin{proof}
For $a_{k_1}<a_{k_2}<\cdots<a_{k_m}$ and $i\in (k_1)$ and $j\in (k_2)$, it is easily seen from \eqref{aa2} that
$$
R_{ijij}=\sum_{l\in[\bar{k}],l\notin (\bar{k_1}),(\bar{k_2})}\frac{2A_{ij,l}^2}{(a_{k_1}-a_l)(a_{k_2}-a_l)}\geq 0.$$
Hence, from \eqref{stru4},
\begin{equation}\label{aa3}
R_{ijij}=-b_{\bar{k}}^2+a_i+a_j\geq -b_{\bar{k}}^2+a_{k_1}+a_{k_2} \geq 0, ~~i,j\in[\bar{k}] \text{ and } (i)\neq(j).
\end{equation}
Therefore, from the generalized Cartan identity \eqref{aa4} in Lemma \ref{eigenspace}, we get
\begin{equation}\label{aa5}
R_{ijij} = -b^2_{\bar k} + a_{k_1} + a_j =0, ~~i\in (\bar{k_1}) \text{ and } j\in[\bar{k}],j\notin(\bar{k_1}).
\end{equation}
The key of this proof is to realize that \eqref{aa5} allows us to further trim the generalized Cartan identity \eqref{aa4} for $i\in (\bar{k_2})$ into
\begin{equation}\label{aa6}
\sum_{j\in[\bar k], j\notin(\bar{k_1}), j\notin(\bar{k_2}))}\frac{R_{ijij}}{a_{k_2}-a_{j}} =0,
\end{equation}
which in turn implies
$$
R_{ijij} = -b^2_{\bar k} + a_{k_2} + a_j =0, ~~i\in (\bar{k_2}) \text{ and } j\in[\bar{k}],j\notin(\bar{k_2}).
$$
Thus, repeating the above argument,  we can get
\begin{equation}\label{aa7}
R_{ijij} = -b^2_{\bar k} + a_i + a_j =0 \text{ for all } i,j\in[\bar{k}] \text{ and } (i)\neq(j),
\end{equation}
which forces $m\leq 2$ and completes the proof.
\end{proof}
We may choose the orthonormal basis $\{E_1,\cdots,E_n\}$ such that
$\{E_1,\cdots,E_n\}$ such that
\begin{equation}\label{coe2}
\begin{split}
\left(B_{ij}\right)=diag&(\underbrace{b_{\bar{1}},\cdots,b_{\bar{1}}},\underbrace{b_{\bar{2}},\cdots,b_{\bar{2}}},
\cdots,\underbrace{b_{\bar{r}},\cdots,b_{\bar{r}}}),\\
\left(A_{ij}\right)=diag&(\underbrace{a_{\bar{1}},\cdots,a_{\bar{1}},\bar{a}_{\bar{1}},\cdots,\bar{a}_{\bar{1}}},\cdots,
\underbrace{a_{\bar{r}},\cdots,a_{\bar{r}},\bar{a}_{\bar{r}},\cdots,\bar{a}_{\bar{r}}}),
\end{split}
\end{equation}
where $a_{\bar{i}}$ and $\bar{a}_{\bar{i}}$ may be same and $b_{\bar{1}}<\cdots<b_{\bar{r}}$. We then define the following two index sets
$$
[i]=\{k\in\{1, 2, \cdots, n\}|\ b_k=b_i\}~~ \text{ and }~~ (i)=\{k\in [i]|\ a_k=a_i\}.$$
Let $s$ be the number of the distinct groups of indices in the collection $\{(1),(2),\cdots,(n)\}$ and label these distinct
groups of indices as $\{(\bar{1}),(\bar{2}),\cdots,(\bar{s})\}$. Clearly, we have  $(i)\subseteq [i] \text{ and } s\geq r.$
For any $i\in\{1, 2, \cdots, n\}$, we consider the pair $(a_i,b_i)$ and observe that
$$
(a_i,b_i)=(a_j,b_j) \text{ if and only if } (i) = (j).
$$
Hence one may write $(a_i,b_i)= (a_{(i)},b_{(i)})$ and there are exactly $s$ distinct pairs.  Let $W$ denote the set of all of the pairs, that is,
$$W=\{(a_{(\bar{1})},b_{(\bar{1})}),(a_{(\bar{2})},b_{(\bar{2})}),\cdots,(a_{(\bar{s})},b_{(\bar{s})})\}.$$
For a number $\varepsilon$ (including $\infty$) and a group $(i)$ fixed, we  define the set of pairs
$$S_{(i)}(\varepsilon):=\{(a_k,b_k)\in W| \ \frac{a_i-a_k}{b_i-b_k}=\varepsilon,~ k\notin (i)\} \bigcup\{(a_{(i)}, b_{(i)})\}.$$
From Lemma \ref{ble2} and the above definition of $S_{(i)}(\varepsilon)$, it is easy to verify the following properties:
\begin{lemma}\label{le2} Under the basis taken in (\ref{coe2}).
For a fixed index set $(i)$, the following hold: \\
(1) ${S}_{(i)}(\infty)$ can have at most two pairs;\\
(2) For two non-empty sets ${S}_{(i)}(\varepsilon_k),{S}_{(i)}(\varepsilon_l)$ and $\varepsilon_k\neq\varepsilon_l$,  ${S}_{(i)}(\varepsilon_k)\cap{S}_{(i)}(\varepsilon_l)=\{(a_{(i)}, b_{(i)})\}$;\\
(3) If  the set ${S}_{(i)}(\varepsilon)= \{(a_{(i)},b_{(i)}), (a_{(j)},b_{(j)})\}$ for $j\notin(i)$, then
\begin{equation}\label{abco1}
R_{klkl}=-b_ib_j+a_i+a_j=0 \text{ for all $k\in(i)$ and $l\in(j)$.}
\end{equation}
\end{lemma}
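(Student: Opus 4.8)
The plan is to obtain (1) and (2) directly from the definitions together with Lemma \ref{ble2}, and then to concentrate the real effort on (3), combining the covariant-derivative identities \eqref{cotensor} with the curvature formula \eqref{cur1}.

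For (1) I would unwind the definition: a pair $(a_m,b_m)\in W$ with $m\notin(i)$ has $\frac{a_i-a_m}{b_i-b_m}=\infty$ exactly when $b_m=b_i$ and $a_m\neq a_i$, so after adjoining $(a_{(i)},b_{(i)})$ the set $S_{(i)}(\infty)$ is precisely the collection of pairs $(a_m,b_i)$ with $a_m$ ranging over the distinct eigenvalues of $A|_{V_{b_i}}$; Lemma \ref{ble2} bounds this number by two. For (2), if some $(a_m,b_m)\neq(a_{(i)},b_{(i)})$ lay in $S_{(i)}(\varepsilon_k)\cap S_{(i)}(\varepsilon_l)$ then $m\notin(i)$, and either $b_m\neq b_i$, so $\frac{a_i-a_m}{b_i-b_m}$ is one finite number equal to both $\varepsilon_k$ and $\varepsilon_l$, or $b_m=b_i$ (whence $a_m\neq a_i$), so $\varepsilon_k=\varepsilon_l=\infty$; in either case $\varepsilon_k=\varepsilon_l$, a contradiction. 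Since both sets contain $(a_{(i)},b_{(i)})$ by construction, their intersection is exactly $\{(a_{(i)},b_{(i)})\}$.

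For (3), I would first note that since $A,B$ are diagonal in the basis \eqref{coe2} and $k\neq l$ (as $j\notin(i)$), \eqref{stru4} gives $R_{klkl}=-b_kb_l+a_k+a_l=-b_ib_j+a_i+a_j$, so only the vanishing is at issue, and I would split into $\varepsilon=\infty$ and $\varepsilon$ finite. If $\varepsilon=\infty$ then $b_i=b_j$, $a_i\neq a_j$ and $k,l$ lie in the same block $[i]$, so $A|_{V_{b_i}}$ has exactly the eigenvalues $a_i,a_j$ and \eqref{aa7} from Lemma \ref{ble2} gives $R_{klkl}=-b_i^2+a_i+a_j=0$, which is the assertion since $b_ib_j=b_i^2$. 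If $\varepsilon$ is finite (equivalently $b_i\neq b_j$), then $[k]\neq[l]$ and \eqref{cur1} applies: $R_{klkl}=\sum_{p\notin[i],[j]}\frac{2B_{kl,p}^2}{(b_i-b_p)(b_j-b_p)}$, and I would argue every summand is zero. Suppose $B_{kl,p}\neq0$ for some $p$ with $b_p\notin\{b_i,b_j\}$; then $k,l,p$ belong to three distinct conformal-principal-curvature blocks, and since $C=0$ forces the covariant derivatives $A_{\cdot\cdot,\cdot}$ and $B_{\cdot\cdot,\cdot}$ to be totally symmetric, I can combine $B_{kl,p}=B_{kp,l}$, $A_{kl,p}=A_{kp,l}$ with the relation $\frac{a_i-a_j}{b_i-b_j}B_{kl,p}=A_{kl,p}$ of \eqref{cotensor} (and, in the degenerate case $a_i=a_j$ with $\varepsilon=0$, with $A_{kl,p}=0$ and $\frac{a_i-a_p}{b_i-b_p}B_{kp,l}=A_{kp,l}=0$ forcing $a_p=a_i$) to conclude $\frac{a_i-a_p}{b_i-b_p}=\varepsilon$. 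Then $(a_p,b_p)\in S_{(i)}(\varepsilon)$ with $p\notin(i)$ and $b_p\neq b_i$, so the two-pair hypothesis forces $(a_p,b_p)=(a_{(j)},b_{(j)})$ and hence $b_p=b_j$, contradicting $p\notin[j]$; therefore $R_{klkl}=0$.

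The main obstacle is the vanishing argument inside (3): recognizing that ``$S_{(i)}(\varepsilon)$ consists of exactly two pairs'' is precisely the hypothesis excluding a third conformal principal curvature $b_p$ carrying a nonzero $B_{kl,p}$, and handling the bookkeeping of the totally symmetric covariant derivatives of $A$ and $B$ (legitimate because $C=0$) together with the degenerate sub-case $a_i=a_j$. The remaining parts are routine definition-chasing.
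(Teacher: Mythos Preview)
Your argument is correct and follows essentially the same route as the paper's proof: parts (1) and (2) are definition-chasing plus Lemma~\ref{ble2}, and for (3) the core step is that a nonzero $B_{kl,p}$ with $p\notin[i]\cup[j]$ would, via the totally symmetric relations in \eqref{cotensor}, force $(a_p,b_p)\in S_{(i)}(\varepsilon)$ and hence a forbidden third pair. You are slightly more careful than the paper in two places: you split off the case $\varepsilon=\infty$ and invoke \eqref{aa7} directly (the paper's ``it suffices that $B_{kl,m}=0$'' tacitly leans on \eqref{cur1}, which needs $[i]\neq[j]$, so your explicit treatment is cleaner there), and you isolate the sub-case $\varepsilon=0$, which is in fact already covered by the same chain $\varepsilon\,B_{kl,p}=A_{kl,p}=A_{kp,l}=\tfrac{a_i-a_p}{b_i-b_p}B_{kp,l}$ without a separate argument. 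Neither addition changes the substance; the key idea is identical to the paper's.
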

\begin{proof} These properties are all trivial except (3). It suffices to show that
$B_{kl,m}=0$ for all $m = 1, 2, \cdots, n$ when $k\in(i)$ and $l\in(j)$. The nontrivial cases are $k\in(i)\subset[i]$, $l\notin [i]$ and $m\notin [i]\cup[l]$.
Hence, from  the third equation in (\ref{cotensor}), we would have
$$
\frac{a_{m}-a_k}{b_{m}-b_k}=\frac{A_{km,l}}{B_{km,l}}=\frac{A_{lk,m}}{B_{lk,m}}=\frac{a_{l}-a_k}{b_{l}-b_k}
$$
if $B_{kl, m} = B_{km,l}$ were not vanishing.  That would imply $(a_m, b_m)\in S_{(i)}(\varepsilon)$ and a contradiction to assumption that $S_{(i)}(\varepsilon)$ has
only two pairs.  Thus the proof is complete.
\end{proof}
\begin{lemma} \label{le3}
 Under the basis taken in (\ref{coe2}). Then any set $$S_{(k)}(\varepsilon)=\{(a_{i_1},b_{i_1}),(a_{i_2},b_{i_2}),\cdots.
(a_{i_t},b_{i_t})\}$$ has only two pairs, that is $t=2$.
\end{lemma}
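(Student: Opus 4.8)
The plan is to show $t\le 2$ by contradiction, exactly in the spirit of Lemma \ref{cartanle}: if some $S_{(k)}(\varepsilon)$ had $t\ge 3$ pairs, then, reading the Gauss‑type equations \eqref{stru4} and \eqref{cur1} on the direct sum of the eigenspaces attached to those pairs, one would produce a symmetric $(0,2)$‑tensor of the kind that Lemma \ref{cartanle} forbids. The case $\varepsilon=\infty$ is already excluded by Lemma \ref{le2}(1), so we may assume $\varepsilon$ is finite. Then every pair $(a_m,b_m)\in S_{(k)}(\varepsilon)$ satisfies $a_m=\varepsilon b_m+\beta$ with the single constant $\beta=a_k-\varepsilon b_k$ (all quantities here are constant, by Lemma \ref{blas} and the constancy of the conformal principal curvatures); in particular two distinct pairs on the line $S_{(k)}(\varepsilon)$ must have distinct $b$‑values, since equal $b$‑values would force equal $a$‑values.

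First I would record the localization fact that $B_{ij,m}=0$ whenever the three pairs $(a_i,b_i),(a_j,b_j),(a_m,b_m)$ are not collinear; equivalently, $B_{ij,m}\neq0$ forces all three to lie in one common set $S_{(i)}(\,\cdot\,)$. Indeed, if $B_{ij,m}\neq0$ then $[i],[j],[m]$ are pairwise distinct by \eqref{w2}, so the third relation of \eqref{cotensor} applies to every pair among $i,j,m$; since $C=0$ makes $A_{ij,m}$ and $B_{ij,m}$ totally symmetric (by \eqref{stru1}--\eqref{stru2}),
$$\frac{a_i-a_j}{b_i-b_j}=\frac{A_{ij,m}}{B_{ij,m}}=\frac{A_{im,j}}{B_{im,j}}=\frac{a_i-a_m}{b_i-b_m},$$
and symmetrically, so the three pairs are collinear. (This is the computation already used for \eqref{aw3} and in the proof of Lemma \ref{le2}.)

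Now suppose $S_{(k)}(\varepsilon)=\{(a_{i_1},b_{i_1}),\dots,(a_{i_t},b_{i_t})\}$ with $t\ge3$ and $b_{i_1}<\dots<b_{i_t}$. Set $V:=\bigoplus_{p=1}^{t}V_{(i_p)}$ with $V_{(i_p)}:=\mathrm{Span}\{E_m:m\in(i_p)\}$. For $p\neq q$, equations \eqref{stru4} and \eqref{cur1} give
$$-\,b_{i_p}b_{i_q}+a_{i_p}+a_{i_q}=R_{i_pi_qi_pi_q}=\sum_{m\notin[i_p],[i_q]}\frac{2B_{i_pi_q,m}^{2}}{(b_{i_p}-b_m)(b_{i_q}-b_m)},$$
and by the localization fact the only $m$ contributing are those with $(a_m,b_m)$ on the line $S_{(k)}(\varepsilon)$, i.e. $m\in\bigcup_{p'\neq p,q}(i_{p'})$, which is the index set of $V$ with $[i_p]\cup[i_q]$ removed. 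Substituting $a_{i_p}=\varepsilon b_{i_p}+\beta$ and putting $\rho_p:=b_{i_p}-\varepsilon$, $c:=\varepsilon^{2}+2\beta$, $F_{pqm}:=\sqrt2\,B_{i_pi_q,m}$, this identity becomes exactly
$$c-\rho_p\rho_q=\sum_{m}\frac{F_{pqm}^{2}}{(\rho_p-\rho_m)(\rho_q-\rho_m)},\qquad\rho_p\neq\rho_q,$$
for the symmetric $(0,2)$‑tensor $T=\mathrm{diag}(\rho_p)$ on $V$, which has $t\ge3$ distinct eigenvalues, together with the symmetric $(0,3)$‑tensor $F$. This contradicts Lemma \ref{cartanle}, so $t\le2$; since a non‑empty $S_{(k)}(\varepsilon)$ contains a pair besides $(a_{(k)},b_{(k)})$, we conclude $t=2$.

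The only genuinely delicate point is the index bookkeeping in the last step: one must verify that, after restriction to $V$, the summation range in \eqref{cur1} is exactly "$m$ over the eigenspaces $V_{(i_{p'})}$ with $p'\neq p,q$", so that \eqref{condt} holds verbatim and Lemma \ref{cartanle} is applicable; this is where the localization fact and the pairwise distinctness of the $b_{i_p}$ are used. The small but essential device is the affine shift $\rho=b-\varepsilon$, which turns the mixed quantity $-b_ib_j+a_i+a_j$ into a pure $c-\rho_i\rho_j$.
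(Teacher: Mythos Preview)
Your argument is correct and follows the same route as the paper: both linearize via $a_m=\varepsilon b_m+\beta$ on $S_{(k)}(\varepsilon)$, shift to $\rho=b-\varepsilon$ (the paper's $\tilde b_i=b_i+\varepsilon$ is a sign slip), and then invoke Lemma~\ref{cartanle} to exclude $t\ge 3$; you are in fact more careful than the paper in singling out the case $\varepsilon=\infty$ and in making explicit the localization fact that $B_{ij,m}\ne 0$ forces collinearity of the three pairs, which is exactly what is needed to restrict the sum in \eqref{cur1} to the subspace $V$ so that Lemma~\ref{cartanle} applies verbatim. One small inaccuracy at the very end: the clause ``a non-empty $S_{(k)}(\varepsilon)$ contains a pair besides $(a_{(k)},b_{(k)})$'' is false in general---by definition $S_{(k)}(\varepsilon)$ always contains $(a_{(k)},b_{(k)})$ and may consist of that pair alone---so the honest conclusion (and all that the paper actually proves or uses) is $t\le 2$.
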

\begin{proof}
For $(a_i,b_i),(a_j,b_j)\in S_{(k_1)}(\varepsilon)$, we have
$\frac{a_i-a_j}{b_i-b_j}=\varepsilon$, thus there exist constant $d$ such that
$$a_i=\varepsilon b_i+d,~~(a_i,b_i)\in S_{(k_1)}(\varepsilon).$$
Let $\tilde{b}_i=b_i+\varepsilon$. From (\ref{stru4}) and (\ref{cur1}), we have
\begin{equation}\label{cartan1}
R_{ijij}=2\sum_k\frac{(B_{ij,k})^2}{(\tilde{b}_i-\tilde{b}_k)(\tilde{b}_j-\tilde{b}_k)}=2d+\varepsilon^2-\tilde{b}_i\tilde{b}_j=c-\tilde{b}_i\tilde{b}_j.
\end{equation}
The equation (\ref{cartan1}) implies that the tensor $B+\varepsilon g$ satisfying (\ref{condt}). If $t\geq 3$, from lemma \ref{cartanle}, we derive to
a contradiction. Thus $t=2$.
\end{proof}
Next we give the proof of Theorem \ref{th2}.
From lemma \ref{le2} and lemma \ref{le3}, we know that
\begin{equation}\label{rab}
R_{ijij}=0, ~b_i\neq b_j.
\end{equation}
From \eqref{cur1} we therefore observe that
$$B_{ij,k}=0,~~when~~i\in[{\bar{1}}],~~j\in[{\bar{2}}],~1\leq k\leq n.$$
We then consider $i\in [\bar 1]$ and $j\in [\bar 3]$ in equation (\ref{cur1}).
This time we notice that
$$(b_k-b_{\bar{1}})(b_k-b_{\bar{3}})>0,~~when~~k\notin [\bar{1}]\cup [\bar{2}] \cup[\bar 3]$$
and $B_{ij,k}=0,~i\in[{\bar{1}}],~k\in[{\bar{2}}]$. From \eqref{cur1} again we observe that
$$B_{ij,k}=0,~~when~~i\in[{\bar{1}}],~~j\in[\bar 2]\cup[{\bar{3}}],~1\leq k\leq n.$$
Repeatedly we can prove that $B_{ij,k}=0$ for $i\in [\bar 1]$ and $j\in [\bar 2]\cup[\bar 3]\cdots \cup[\bar r]$. Similarly we
can prove $B_{ij,k} = 0$ for all indices, thus $B$ is parallel.

{\bf Claim 1}: $r=3$.\\
We assume that $r>3$, we can take four distinct conformal principal curvatures $b_1,b_2,b_3,b_4$. Using (\ref{rab}) and (\ref{stru4}), we have
$$-b_1b_2+a_1+a_2=-b_1b_3+a_1+a+3=-b_2b_4+a_2+a_4=-b_3b_4+a_3+a_4=0,$$
which implies
$(b_1-b_4)(b_2-b_3)=0.$
This is a contradiction.

From (\ref{rab}), we have $a_i=a_j,~~[i]=[j]$ and
$$-b_1b_2+a_1+a_2=0,~~-b_1b_3+a_1+a_3=0,~~-b_2b_3+a_2+a_3=0.$$
Thus we can get
$$a_1=\frac{1}{2}(b_1b_2+b_1b_3-b_2b_3),~~a_2=\frac{1}{2}(b_1b_2+b_2b_3-b_1b_3),~~a_3=\frac{1}{2}(b_3b_2+b_1b_3-b_1b_2).$$
Since $B$ is parallel, using the definition of the covariant derivatives of $(B_{ij})$, we have
\begin{equation}\label{conn}
\omega_{ij}=0,~~b_i\neq b_j,
\end{equation}
which implies $$d\omega_i=\sum_{j\in [i]}\omega_{ij}\wedge\omega_j.$$
Therefore the eigenspaces $V_{b_1}$, $V_{b_2}$ and $V_{b_3}$ are integrable. Locally we can write $$M^n=M_1\times M_2\times M_3.$$
Let $[b_i]=\{k|b_k=b_i\}$, and
$$g_1=\sum_i\omega^2_i,~i\in [b_1],~~g_2=\sum_i\omega^2_i,~i\in [b_2],~~g_3=\sum_i\omega^2_i,~i\in [b_3].$$
Then we have
$$(M^n,g)=(M_1,g_1)\times(M_2,g_2)\times(M_3,g_3).$$
If $dim M_i\geq 2$, then $(M_i, g_i)$ is of constant curvature. Like as the proof in \cite{hul}, we know that $M^n$ is conformally equivalent to
the hypersurface given by example \ref{ex4}.

{\bf Acknowledgements:} Authors are supported by the
grant No. 11571037 and No. 11471021 of NSFC. And authors would like to thank Professor ChangPing Wang for his
encouragements and helps.

\end{document}